 \def\beq{ \begin{equation} }
 \def\eeq{ \end{equation} }
 \def\beqx{ \begin{equation*} }
 \def\eeqx{ \end{equation*} }
 \def\beqa{\begin{eqnarray}}
 \def\eeqa{\end{eqnarray}}
 \def\beqax{\begin{eqnarray*}}
 \def\eeqax{\end{eqnarray*}}
 \def\bal{\begin{allign}}
 \def\eal{\end{allign}}
 \def\balx{\begin{allign*}}
 \def\ealx{\end{allign*}}
 \def\mn{\medskip\noindent}
 \def\ms{\medskip}
 \def\ep{\epsilon}
\def\square{\vcenter{\vbox{\hrule height .4pt
  \hbox{\vrule width .4pt height 5pt \kern 5pt
        \vrule width .4pt} \hrule height .4pt}}}
\def\eopt{\hfill$\square$}
\def\var{\hbox{var\,}}
\numberwithin{equation}{section}
\newtheorem{theorem}{Theorem}
\newtheorem{lemma}{Lemma}[section]
\begin{document}

\title{Persistence of Activity in Threshold Contact Processes, \\
an ``Annealed Approximation" of Random Boolean Networks}
\author{Shirshendu Chatterjee and Rick Durrett
\thanks{Both authors were partially supported by NSF grant 0704996
from the probability program at NSF.} \\
Cornell University }
\date{\today}
\maketitle

 \begin{abstract}
 We consider a model for gene regulatory networks that is a modification of Kauffmann's (1969) random Boolean networks.
 There are three parameters: $n =$ the number of nodes, $r =$ the number of inputs to each node, and $p =$ the expected
 fraction of 1's in the Boolean functions at each node. Following a standard practice in the physics literature, we use
 a threshold contact process on a random graph on $n$ nodes, in which each node has in degree $r$, to approximate its dynamics.
 We show that if $r\ge 3$ and $r \cdot 2p(1-p)>1$, then the threshold contact process persists for a long time, which
 correspond to chaotic behavior of the Boolean network. Unfortunately, we are only able to prove the persistence time
 is $\ge \exp\left(cn^{b(p)}\right)$ with $b(p)>0$ when $r\cdot 2p(1-p)> 1$, and $b(p)=1$ when $(r-1)\cdot 2p(1-p)>1$.
 \end{abstract}

\vfill
\noindent
Keywords: random graphs, threshold contact process, phase transition, random Boolean networks, gene regulatory networks
\clearpage

 \section{Introduction}

 Random Boolean networks were originally developed by Kauffman (1969) as an abstraction of genetic regulatory networks.
 In our version of his model, the state of each node $x\in V_n \equiv \{1,2,\dots,n\}$ at time $t = 0, 1, 2, \ldots$ is
 $\eta_t(x)\in\{0,1\}$, and
 each node $x$ receives input from $r$ distinct nodes $y_1(x), \dots, y_r(x)$, which are chosen randomly from $V_n\setminus \{x\}$.

 We construct our random directed graph $G_n$ on the vertex set $V_n = \{ 1, 2, \ldots, n \}$ by putting oriented edges to each node from its input nodes.
 To be precise, we define
 the graph by creating a random mapping $\phi: V_n \times \{ 1, 2, \ldots, r \} \to V_n$, where $\phi(x,i) = y_i(x)$, such that $y_i(x) \neq x$ for $1\le i \le r$ and $y_i(x) \neq y_j(x)$ when $i \neq j$, and taking the edge set
 $E_n \equiv \{(y_i(x),x): 1\le i\le r, x \in V_n \}$. So each vertex has in-degree $r$ in our random graph $G_n$.
 The total number of choices for $\phi$ is $[(n-1)(n-2) \cdots (n-r)]^n$. However, the
 resulting  graph $G_n$ will remain the same under any permutation of
 the vector $\mathbf y_x\equiv (y_1(x), \ldots, y_r(x))$ for any
 $x\in V_n$. So if $e_{zx}\in \{0,1\}$ is the number of directed
 edges from node $z$ to node $x$ in $G_n$, then $\sum_{z=1}^n
 e_{z,x}=r$, and the total number of permutations of the vectors
 $\mathbf y_x, 1\le x\le n$, that correspond to the same graph is
 $(r!)^n$. So if $\mathbb{P}$ denotes the distribution of $G_n$, then
  \beqx \mathbb P (e_{zx}, 1\le z,
 x\le n)=\frac{(r!)^n}{[(n-1)(n-2)\cdots (n-r)]^n} =
 \frac{1}{\left[{n-1 \choose r}\right]^n},\eeqx
 if $e_{z,x}\in\{0,1\}, e_{x,x}=0$
 and $\sum_{z=1}^n e_{zx}=r$ for all $x\in V_n$,
  and $\mathbb P(e_{zx}, 1\le x, z\le n)=0$ otherwise.
 So our  random graph $G_n$ has uniform distribution over the collection of
 all directed graphs on the vertex set $V_n$ in which each vertex
 has in-degree $r$. Once chosen the network
 remains fixed through time. The rule for updating node $x$ is
 $$
 \eta_{t+1}(x) = f_x( \eta_t(y_1(x)),\ldots, \eta_t(y_r(x)) ),
 $$
 where the values $f_x(v)$, $x\in V_n$, $v\in \{ 0, 1\}^r$, chosen at the beginning and then fixed for all time, are independent and $=1$ with probability
 $p$.

A number of simulation studies have investigated the behavior of
this model. See Kadanoff, Coppersmith, and Aldana (2002) for survey.
Flyvberg and Kjaer (1988) have studied the degenerate case of $r=1$
in detail. Derrida and Pommeau (1986) have argued that for $r\geq 3$
there is a phase transition in the behavior of these networks
between rapid convergence to a fixed point and exponentially long
persistence of changes, and identified the phase transition curve to
be given by the equation $r \cdot 2p(1-p) =1$. The networks with
parameters below the curve have behavior that is `ordered', and
those with parameters above the curve have `chaotic' behavior. Since
chaos is not healthy for a biological network, it should not be
surprising that real biological networks avoid this phase. See
Kauffman (1993), Shmulevich, Kauffman, and Aldana (2005), and Nykter
et al.~(2008).

To explain the intuition behind the conclusion of Derrida and Pomeau (1986), we define another process $\{\zeta_t(x): t\ge 1\}$ for $x\in V_n$, which they called the {\it annealed approximation}. The idea is that $\zeta_t(x)=1$ if and only if $\eta_t(x) \neq
\eta_{t-1}(x)$, and $\zeta_t(x)=0$ otherwise. Now if the state of at
least one of the inputs $y_1(x), \dots, y_r(x)$ into node $x$ has
changed at time $t$, then the state of node $x$ at time $t+1$ will
be computed by looking at a different value of $f_x$. If we ignore
the fact that we may have used this entry before, we get the
dynamics of the threshold contact process
$$
P\left(\left. \zeta_{t+1}(x)=1 \right|
\zeta_t(y_1(x))+\cdots+\zeta_t(y_r(x)) > 0\right)= 2p(1-p),
$$
and $\zeta_{t+1}(x)=0$ otherwise. Conditional on the state at time
$t$, the decisions on the values of $\zeta_{t+1}(x)$, $x \in V_n$,
are made independently.

We content ourselves to work with the threshold contact process,
since it gives an approximate sense of the original model, and we
can prove rigorous results about its behavior. To simplify notation
and explore the full range of threshold contact processes we let $q
\equiv 2p(1-p)$, and suppose $0 \le q \le 1$. As mentioned above, it
is widely accepted that the condition for prolonged persistence of
the threshold contact process is $qr>1$. To explain this, we note
that vertices in the graph $G_n$ have average out-degree $r$, so a
value of 1 at a vertex will, on the average, produce $qr$ 1's in the
next generation.

We will also write the threshold contact process as a set valued
process. Let $\xi_t \equiv \{ x : \zeta_t(x) = 1 \}$. We will refer
to the vertices $x \in \xi_t$ as occupied at time $t$. So if $P_G$
is the distribution of the threshold contact process
$\boldsymbol{\xi} \equiv \{\xi_t: t\ge 0\}$ conditioned on the graph $G_n$,
then
 \beqax
 P_G\left(\left.x\in\xi_{t+1}\right|\{y_1(x), \ldots, y_r(x)\}\cap
 \xi_t\neq \emptyset\right) & = & q, \text{ and}\\
 P_G\left(\left.x\in\xi_{t+1}\right|\{y_1(x), \ldots, y_r(x)\}\cap
 \xi_t= \emptyset\right) & = & 0.
 \eeqax

Let $\boldsymbol \xi^A\equiv \left\{\xi_t^A: t\ge 0\right\}$
denote the threshold contact process starting from
$\xi_0^A=A\subset V_n$, and $\boldsymbol \xi^1\equiv \left\{\xi_t^1:
t\ge 0\right\}$ denote the special case when $A=V_n$. Let $\rho$ be
the survival probability of a branching process with offspring
distribution $p_r=q$ and $p_0=1-q$. By branching process theory \beq
\rho=1-\theta, \text{ where $\theta\in(0,1)$ satisfies } \theta = 1
- q + q \theta^r. \label{bpsurvp} \eeq

Using all the ingredients above we now present our first result.

\begin{theorem}\label{prolongpersist}
Suppose $q(r-1)>1$ and let $\delta>0$. Let $\mathbf P$ denote the
distribution of the threshold contact process $\boldsymbol \xi^1$,
starting from all sites occupied, on the random graph $G_n$, which
has distribution $\mathbb{P}$. Then there is a positive
constant $C(\delta)$ so that as $n \to \infty$
$$
\inf_{t\le \exp(C(\delta)n)} \mathbf P\left(\frac{|\xi_t^1|}{n}\ge
\rho-2\delta \right)\to 1.
$$
\end{theorem}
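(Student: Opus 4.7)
The plan is to exploit that the one-step mean-field map $h(\alpha) := q(1-(1-\alpha)^r)$ has $\rho$ as an attracting positive fixed point, combine this with sharp one-step exponential concentration of $|\xi_{t+1}|$ given $(\xi_t,G_n)$, and close by a union bound over $t\le\exp(C(\delta)n)$.

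\textbf{Mean-field fixed point.} From the branching-process equation $\theta = 1-q+q\theta^r$ with $\theta = 1-\rho$ one checks directly that $h(\rho) = \rho$. Because $h$ is strictly concave on $[0,1]$ with $h'(0) = qr > 1$, $\rho$ is the unique positive fixed point, $h$ crosses the diagonal from above at $\rho$, and hence $h'(\rho) < 1$. In particular, there exists $\eta(\delta) > 0$ such that $h(\alpha) \ge \rho - \delta$ for all $\alpha \in [\rho-2\delta, 1]$, and the deterministic iteration started at $\alpha_0 = 1$ decreases monotonically to $\rho$ while remaining in $[\rho, 1]$.

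\textbf{One-step concentration.} Let $N(\xi,G_n) := |\{x:\{y_1(x),\ldots,y_r(x)\}\cap\xi\neq\emptyset\}|$. From the dynamics given in the introduction, the bits $\zeta_{t+1}(x)$ are conditionally independent Bernoullis given $(\xi_t, G_n)$, so $|\xi_{t+1}|$ conditional on $(\xi_t,G_n)$ is distributed as $\mathrm{Bin}(N(\xi_t,G_n), q)$, and a Chernoff bound yields
$$P\bigl(|\xi_{t+1}| < qN(\xi_t,G_n)-\epsilon n \,\big|\,\xi_t,G_n\bigr) \le e^{-c\epsilon^2 n}.$$
To relate $N$ back to the density I would establish a $\mathbb P$-event $\mathcal G$, of $\mathbb P$-probability $1-o(1)$, on which $N(\xi,G_n) \ge n(1-(1-|\xi|/n)^r) - \epsilon n$ simultaneously for every subset $\xi$ in the relevant density range. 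For each fixed $\xi$ the indicators $\mathbf 1[\{y_1(x),\ldots,y_r(x)\} \cap \xi\neq\emptyset]$ are $\mathbb P$-independent across $x$ (since the in-neighborhoods of distinct vertices are chosen independently in the construction of $G_n$), so a second Chernoff gives exponential pointwise concentration.

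\textbf{Iteration and main obstacle.} Combining the two bounds with $\epsilon = \delta/4$, on $\mathcal G \cap \{|\xi_t|/n \ge \rho-2\delta\}$ I obtain $|\xi_{t+1}|/n \ge h(|\xi_t|/n) - \delta/2 \ge \rho - 3\delta/2 \ge \rho-2\delta$ except on an event of conditional probability at most $e^{-c(\delta)n}$. A union bound over $t \le \exp(C(\delta)n)$ with any $C(\delta) < c(\delta)$, together with $\mathbb P(\mathcal G^c) \to 0$, then yields the theorem. The hard part will be the uniform-in-$\xi$ estimate defining $\mathcal G$: a naive union bound over the $2^n$ subsets of $V_n$ is too lossy at small $\delta$, and I expect the hypothesis $q(r-1)>1$ to enter precisely here, either by restricting attention to the much smaller class of configurations actually reachable by the process (for instance via a forward exploration whose effective branching mean is $q(r-1)$, so that the relevant configurations lie in a set of subexponential entropy), or through a sharper large-deviation argument that uses $q(r-1)>1$ to beat the $n\ln 2$ entropy cost of the union bound.
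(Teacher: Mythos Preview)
Your forward, mean-field approach is natural but has a real gap exactly where you flag it: the uniform-in-$\xi$ lower bound on $N(\xi,G_n)$ over all $\xi$ with $|\xi|/n\ge\rho-2\delta$. Neither of your proposed fixes closes it. The configurations $\xi_t$ have size $\Theta(n)$ and depend on $G_n$, so there is no small ``reachable'' class to which one can restrict; and the union bound must beat an entropy $\binom{n}{\lceil(\rho-2\delta)n\rceil}=e^{H(\rho-2\delta)n}$ with a Hoeffding exponent of order $\epsilon^2 n$ where $\epsilon$ must be $O(\delta)$ (since $h(\rho-2\delta)-(\rho-2\delta)=O(\delta)$), which fails for small $\delta$ irrespective of $q(r-1)>1$. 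In the forward picture there is no mechanism by which $q(r-1)>1$ sharpens concentration of $N(\xi,G_n)$ when $|\xi|/n$ is bounded away from $0$ and $1$; the number $r-1$ is a \emph{dual}-graph phenomenon (it is the minimal number of new out-neighbors one gains in $\hat G_n$ when enlarging a connected set by one vertex), not a forward one.

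The paper avoids this obstacle by passing to the dual coalescing branching process via $\{x\in\xi_t^1\}=\{\hat\xi_t^{\{x\}}\neq\emptyset\}$ and proving an isoperimetric inequality (Theorem~\ref{isoper}) for \emph{small} sets in the reversed graph: with high probability every $U$ with $|U|\le\epsilon_0 n$ has out-neighborhood $|U^*|\ge(r-1-\eta)|U|$ in $\hat G_n$. Here the union bound over $\binom{n}{m}$ with $m\le\epsilon_0 n$ is affordable because the competing term $(m/n)^{rm}$ is extremely small. The hypothesis $q(r-1)>1$ then says each step of the dual grows its size by a factor $>1$, yielding exponential-in-$n$ survival of $\hat\xi^{\{x\}}$ once it reaches size $n^b$ (Lemmas~\ref{carryon}--\ref{dualpersist}). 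The density statement for $|\xi_t^1|/n$ is recovered by a second-moment argument on the indicators $Y_x=\mathbf 1\{\text{the dual from }x\text{ reaches size }n^b\}$, using near-independence of distinct duals on the locally tree-like graph (Lemmas~\ref{growth}--\ref{startup}). If you want to make a forward argument work, you would need a substitute for Theorem~\ref{isoper} that controls large sets, and the paper's Claim after Theorem~\ref{isoper} shows that the isoperimetric constant really is $r-1$, so no improvement is available there.
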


To prove this result, we will consider the dual coalescing branching
process $ \hat {\boldsymbol \xi}\equiv \{\hat\xi_t: t\ge 0\}$. In
this process if $x$ is occupied at time $t$, then with probability
$q$ all of the sites $y_1(x), \ldots, y_r(x)$ will be occupied at
time $t+1$, and with probability $1-q$ none of them will be occupied
at time $t+1$. Birth events from different sites are independent.
Let $\hat{\boldsymbol \xi}^A\equiv \{\hat \xi_t^A: t\ge 0\}$ be the
dual process starting from $\hat\xi_0^A=A\subset V_n$. The two
processes can be constructed on the same sample space so that for
any choices of $A$ and $B$ for the initial sets of occupied sites,
$\boldsymbol \xi^A$ and $\hat {\boldsymbol \xi}^B$ satisfies the
following duality relationship, see Griffeath (1978).
 \beq \left\{ \xi^A_t
 \cap B \neq \emptyset\right\} = \left\{ \hat \xi^B_t \cap A \neq
 \emptyset \right\}, \quad t=0, 1, 2, \ldots . \label{duality} \eeq

Taking $A = \{1, 2, \ldots, n \}$ and $B = \{x\}$ this says
\beq\label{duality1} \left\{ x \in \xi^1_t \right\} = \left\{ \hat
\xi^{\{x\}}_t \neq \emptyset \right\}, \eeq
 or, taking probabilities of both the events above, the density of
occupied sites in $\boldsymbol \xi^1$ at time $t$ is equal to the
probability that $\hat {\boldsymbol \xi}^{\{x\}}$ survives until
time $t$. Since over small distances our graph looks like a tree in
which each vertex has $r$ descendants, the last quantity $\approx
\rho$.

From \eqref{duality} it should be clear that we can prove Theorem
\ref{prolongpersist} by studying the coalescing branching process.
The key to this is an ``isoperimetric inequality". Let $\hat G_n$ be
the graph obtained from our original graph $G_n=(V_n,E_n)$ by
reversing the edges. That is, $\hat G_n = (V_n,\hat E_n)$, where
$\hat E_n = \{ (x,y) : (y,x) \in E_n \}$. Given a set $U \subset
V_n$, let \beq\label{ustar} U^* = \{ y \in V_n : x \to y \text{ for
some } x \in U \},\eeq where $x \to y$ means $(x,y) \in \hat E_n$.
Note that $U^*$ can contain vertices of $U$. The idea behind this
definition is that if $U$ is occupied at time $t$ in the coalescing
branching process, then the vertices in $U^*$ may be occupied at
time $t+1$.

\begin{theorem} \label{isoper}
Let $E(m,k)$ be the event that there is a subset $U\subset V_n$ with
size $|U|=m$ so that $|U^*| \le k$. Given $\eta>0$, there is an
$\ep_0(\eta)>0$ so that for $m \le \ep_0 n$
$$
\mathbb P\left[E(m,(r-1-\eta)m)\right] \le \exp(-\eta m \log(n/m)/2
).
$$
\end{theorem}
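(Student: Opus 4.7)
The plan is to prove the isoperimetric inequality by a direct first-moment / union-bound argument, exploiting the fact that in the reversed graph $\hat G_n$ each vertex has out-degree exactly $r$, with out-neighborhoods chosen independently across vertices. In this picture, $|U^*|\le k$ is precisely the statement that all $rm$ out-edges emanating from $U$ are contained in some common set $W$ of size $k$; the independence across the vertices $x\in U$ is what makes such simultaneous containment exponentially unlikely.

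For fixed $U$, $W$ with $|U|=m$, $|W|=k$, the probability that the out-neighborhood $\{y_1(x),\ldots,y_r(x)\}$ of a given $x\in U$ lies entirely in $W$ is at most $\binom{k}{r}/\binom{n-1}{r}$, since this $r$-subset is uniform over the $r$-subsets of $V_n\setminus\{x\}$. By independence across $x$, the joint probability is at most $[\binom{k}{r}/\binom{n-1}{r}]^m$. A union bound over the choice of $U$ and of $W$ then gives
\beqx
\mathbb{P}(E(m,k)) \le \binom{n}{m}\binom{n}{k}\left[\frac{\binom{k}{r}}{\binom{n-1}{r}}\right]^m.
\eeqx

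I would then specialize to $k=(r-1-\eta)m$ and apply the crude estimates $\binom{n}{j}\le (en/j)^j$ together with $\binom{k}{r}/\binom{n-1}{r}\le (k/(n-r))^r$. Collecting the $\log(n/m)$-order contributions, the three logarithmic terms from $\binom{n}{m}$, $\binom{n}{k}$ and the in-neighborhood factor produce $m\log(n/m)+(k-rm)\log(n/k)$, and since $k-rm=-(1+\eta)m$ the dominant part simplifies to $-\eta m\log(n/m)$, leaving a remainder of the form $C(r,\eta)\,m$ with $C(r,\eta)=r-\eta+(1+\eta)\log(r-1-\eta)$ depending only on $r$ and $\eta$. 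This pleasant cancellation is what produces the extra factor of $\log(n/m)$ in the exponent beyond a merely linear-in-$m$ rate.

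The only real obstacle is the bookkeeping needed to choose $\ep_0(\eta)$. To absorb $C(r,\eta)\,m$ into half of $\eta m\log(n/m)$, I need $\log(n/m)\ge 2C(r,\eta)/\eta$, which is forced by the choice $\ep_0(\eta)=\exp(-2C(r,\eta)/\eta)$. For $m\le \ep_0(\eta)n$ this gives
\beqx
\mathbb{P}\bigl(E(m,(r-1-\eta)m)\bigr) \le \exp\bigl(-\eta m\log(n/m)/2\bigr),
\eeqx
as required. Beyond the initial union bound no further probabilistic input is needed; the rest is purely the algebra of the cancellation and the selection of the threshold $\ep_0$.
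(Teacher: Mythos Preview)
Your proof is correct and follows essentially the same first-moment/union-bound approach as the paper: bound $\mathbb{P}(U^*\subset W)$ by $\bigl[\binom{k}{r}/\binom{n-1}{r}\bigr]^m$, sum over choices of $U$ and $W$, and simplify using $\binom{n}{j}\le(en/j)^j$ to extract the $-\eta m\log(n/m)$ term plus an $O(m)$ remainder that is absorbed by choosing $\ep_0$ small. Your version is actually slightly cleaner than the paper's: they first bound the probability that $|U^*|=\ell$ exactly and then sum over all $\ell\le\lfloor(r-1-\eta)m\rfloor$ (picking up an extra harmless factor of $rm$), whereas you observe directly that $|U^*|\le k$ implies $U^*\subset W$ for some $W$ of size exactly $k$, which avoids that sum.
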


\noindent In words, the isoperimetric constant for small sets is
$r-1$. It is this result that forces us to assume $q(r-1) > 1$ in
Theorem \ref{prolongpersist}.

\mn {\bf Claim.} There is a $c>0$ so that if $n$ is large, then,
with high probability, for each $m \le cn$ there is  a set $U_m$
with $|U_m|=m$ and $|U_m^*| \le 1 + (r-1)m$.

\mn {\it Sketch of Proof.} Define an undirected graph $H_n$ on the
vertex set $V_n$ so that $x$ and $y$ are adjacent in $H_n$ if and
only if there is a $z$ so that $x \to z$ and $y \to z$ in $\hat
G_n$. The drawing illustrates the case $r=3$.

\begin{center}
\begin{picture}(240,100)
\put(70,40){\line(-1,2){20}}
\put(70,40){\line(0,1){40}}
\put(70,40){\line(1,2){20}}
\put(110,40){\line(-1,2){20}}
\put(110,40){\line(0,1){40}}
\put(110,40){\line(1,2){20}}
\put(150,40){\line(-1,2){20}}
\put(150,40){\line(0,1){40}}
\put(150,40){\line(1,2){20}}
\put(190,40){\line(-2,1){80}}
\put(190,40){\line(0,1){40}}
\put(190,40){\line(1,2){20}}
\put(67,30){$x$}
\put(107,30){$y$}
\put(87,85){$z$}
\put(47,55){$\uparrow$}
\put(25,55){$\hat G_n$}
\end{picture}
\end{center}

 \noindent The mean number of neighbors of a vertex in $H_n$ is $r^2
 \ge 9$, so standard arguments show that there is a $c>0$ so that,
 with probability tending to 1 as $n\to\infty$, there is a connected
 component $K_n$ of $H_n$ with $|K_n| \ge cn$. If $U$ is a connected
 subset of $K_n$ with $|U|=\lfloor cn\rfloor$, then by building up $U$ one vertex at
 a time and keeping it connected we get a sequence of sets $\{U_m,
 m=1, 2, \ldots, \lfloor cn\rfloor\}$ with $|U_m|=m$ and $|U_m^*| \le 1 + (r-1)m$.
 \eopt

\ms Since the isoperimetric constant is $\le r-1$, it follows that
when $q(r-1)< 1$, then for any $\ep>0$ there are bad sets $A$ with
$|A|\le n\ep$, so that $E\left|\hat\xi^A_1\right| \le |A|$.
Computations from the proof of Theorem \ref{isoper} suggest that
there are a large number of bad sets. We have no idea how to bound
the amount of time spent in bad sets, so we have to take a different
approach to show persistence when $1/r<q\le 1/(r-1)$.

\begin{theorem} \label{weakpersist}
Suppose $qr>1$. If $\delta_0$ is small enough, then for any
$0<\delta<\delta_0$, there are constants $C(\delta)>0$ and
$B(\delta)=(1/8-2\delta)\log(qr-\delta)/\log r$ so that as $n \to
\infty$
$$
\inf_{t\le \exp\left(C(\delta)\cdot n^{B(\delta)}\right)} \mathbf
P\left(\frac{|\xi_t^1|}{n}\ge \rho - 2 \delta \right)\to 1.
$$
\end{theorem}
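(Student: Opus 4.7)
By the duality \eqref{duality1}, it suffices to show that for a typical vertex $x \in V_n$ and all $t \le \exp(C(\delta) n^{B(\delta)})$, the dual process $\hat\xi_t^{\{x\}}$ is non-empty with probability $\ge \rho - 2\delta - o(1)$, accompanied by a second-moment bound that upgrades the expectation to a high-probability statement. Since the isoperimetric drift argument underlying Theorem \ref{prolongpersist} requires the stronger condition $q(r-1) > 1$, in the present regime we cannot propagate a single small occupied set forward by deterministic expansion. Instead I would exploit the bare branching rate $qr > 1$: grow the dual locally to a ``large but sub-linear'' size $M := n^{B(\delta)}$ by a branching-process coupling, and then bootstrap to show persistence for time $\exp(C(\delta) M)$.

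\textbf{Step 1 (local branching growth).} Set $k := \lfloor (1/8-\delta)\log_r n\rfloor$, so that the radius-$k$ ball around $x$ in $\hat G_n$ has at most $r^k = n^{1/8-\delta}$ vertices, and the probability that a backward exploration from $x$ produces a repeated vertex is $O(r^{2k}/n) = O(n^{-3/4+2\delta})$. Off this ``collision'' event, $\hat\xi_t^{\{x\}}$ coincides exactly with a Galton--Watson process $Z_t$ with offspring distribution $p_r = q$, $p_0 = 1-q$, for $0 \le t \le k$. Since $qr > 1$, $Z_t$ survives with probability $\rho$, and standard Galton--Watson large-deviation theory gives, conditional on survival, $Z_k \ge (qr - \delta)^k = n^{B(\delta)} = M$ with probability $\ge 1-\delta$. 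Combining, $P\bigl(|\hat\xi_k^{\{x\}}| \ge M\bigr) \ge \rho - 2\delta - o(1)$.

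\textbf{Step 2 (persistence from size $M$).} The heart of the argument is a bound of the form
\[
P\bigl(\hat\xi_s^A \ne \emptyset \text{ for all } 0 \le s \le T\bigr) \ge 1 - e^{-cM}
\quad\text{for } T = \exp(C(\delta) M) \text{ and any } A \text{ with } |A| \ge M.
\]
I would prove this by a block bootstrap: inside $A$, identify a sub-family of $\Omega(M)$ ``seed'' vertices whose backward $k$-neighborhoods in $\hat G_n$ are mutually well-separated (so the branching decisions driving $\hat\xi^{\{x_i\}}$ for different seeds $x_i$ are essentially independent) and locally tree-like (so Step 1 applies). Each seed independently regenerates to size $\ge M$ over the next $k$ time steps with probability $\ge \rho - \delta$, so by Chernoff concentration the event ``enough seeds succeed to keep the dual non-empty and of size $\ge M$ after $k$ steps'' holds with probability $\ge 1 - e^{-cM}$. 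Iterating over $T/k$ blocks of length $k$ and absorbing the union bound into $C(\delta)$ yields the displayed estimate.

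\textbf{Conclusion and main obstacle.} Chaining Steps 1 and 2 via duality gives $\mathbf E|\xi_t^1|/n \ge (\rho-2\delta)(1-e^{-cM}) - o(1) \ge \rho - 3\delta$ uniformly for $t$ in the stated range; the companion variance bound comes from observing that covariances of $\mathbb 1\{x \in \xi_t^1\}$ and $\mathbb 1\{y \in \xi_t^1\}$ are non-negligible only when the backward $k$-neighborhoods of $x$ and $y$ interact (an $o(1)$ fraction of pairs in $V_n^2$), after which Chebyshev yields the required high-probability bound. Step 2 is by far the main obstacle and the source of the exponent $B(\delta)<1$: without the isoperimetric drift from Theorem \ref{isoper} we cannot iterate a deterministic expansion, so we rely solely on the branching rate $qr > 1$ locally, and the competing constraints ``need $\Omega(M)$ disjoint seed neighborhoods inside $A$'' versus ``the BP-approximation must remain valid on those neighborhoods'' are what pin down the particular exponent $(1/8 - 2\delta)\log(qr-\delta)/\log r$.
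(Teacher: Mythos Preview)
Your overall architecture is right---dual, branching coupling on a locally tree-like neighborhood, a block iteration to stretch survival to $\exp(C n^B)$, and a second-moment argument at the end---and it matches the paper's strategy. But Step~2 has a real gap: the quantity you need to maintain across blocks is not $|\hat\xi_t^A|\ge M$ but rather the size of a \emph{well-separated} subset, i.e.\ $m(\hat\xi_t^A,2k)\ge M$ in the paper's notation \eqref{mAdef}. Your iteration assumes that at the start of each block you can extract $\Omega(M)$ seeds whose radius-$k$ neighborhoods are mutually disjoint, but you never show that this invariant is regenerated at the end of a block. ``Enough seeds succeed'' gives size $\ge M$, but the descendants of many different seeds, taken together, need not be pairwise $2k$-separated, so the next block's independence hypothesis is unjustified. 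Relatedly, the displayed claim in Step~2 is stated for ``any $A$ with $|A|\ge M$''; this is too strong and is not what you actually use---you need the separation hypothesis on $A$.

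The paper handles exactly this point, and it is the crux of the argument. First, Lemma~\ref{jointgrowth} guarantees (with high probability, uniformly over all $x$) at most one collision out to radius $2\lceil aR\rceil$, not just $\lceil aR\rceil$; this doubling is essential. Then, from the well-separated set $X_i$ one passes to offspring $N_i$ each of which has a collision-free $r$-tree out to depth $2\lceil aR\rceil-1$. From $N_i$ the paper picks a \emph{single} successful seed $w_i$ whose dual reaches size $\ge n^B$ after $\lceil aR\rceil-1$ steps; because $(\lceil aR\rceil-1)+\lceil aR\rceil=2\lceil aR\rceil-1$, every pair of descendants of $w_i$ automatically satisfies $d(\cdot,\cdot)\ge 2\lceil aR\rceil$, restoring the separation invariant for the next block. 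The authors explicitly note that taking only one seed ``seems foolish'' but that they ``do not know how to guarantee that the vertices are suitably separated if we pick more''---that is precisely the step your Chernoff argument skips. To repair your proof, replace the size invariant by $m(\cdot,2k)$, strengthen the tree-like input to depth $2k$, and regenerate the invariant from one successful seed rather than from the union of all successful seeds.
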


 To prove this, we will again investigate persistence of the dual.
 Let
 \beqa
 d_0(x,y) & \equiv &\text{ length of a shortest oriented path from $x$
 to $y$ in $\hat G_n$}, \notag \\
 d(x,y) & \equiv & \min_{z\in V_n} [d_0(x,z) +
 d_0(y,z)],\label{dist} \eeqa
 and for any subset $A$ of vertices let \beq
 m(A,K)=\max_{S\subseteq A} \{|S|: d(x,y)\ge K \text{ for } x, y\in
 S, x\neq y\}. \label{mAdef} \eeq
 Let $R \equiv \log n/\log r$ be the
 average value of $d_0(1,y)$, let $a=1/8-\delta$ and $B=(a-\delta)\log
 (qr-\delta)/\log r$. We will show that if
 $m\left(\hat\xi^A_s,2\lceil aR\rceil\right) < \lfloor n^B\rfloor1$ at some time
 $s$, then with high probability, we will later have
 $m\left(\hat\xi^A_t,2 \lceil aR\rceil\right) \ge \lfloor n^B\rfloor$ for some
 $t>s$. To do this we explore the vertices in $\hat G_n$ one at
 a time using a breadth-first search algorithm based on the distance function $d_0$. We say that a
 collision has occurred if we encounter a vertex more than once in
 the exploration process. First we show in Lemma \ref{jointgrowth} that, with probability
 tending to 1 as $n \to \infty$,  there can be at most one collision
 in the set $\{u: d_0(x,u) \le 2\lceil aR\rceil\}$ for any $x\in
 V_n$. Then we argue in Lemma \ref{dualweakpersist}  that when we first have
 $m\left(\hat\xi^A_s,2\lceil aR\rceil\right) < \lfloor n^B\rfloor$, there is a subset $N$ of occupied sites
 so that $|N| \ge (q-\delta)\lfloor n^B\rfloor$, and
 $d(z,w)\ge 2\lceil aR\rceil-2$ for any two distinct vertices $z,w \in N$, and
 $\{u: d_0(z,u) \le 2\lceil aR\rceil -1\}$ has no collision. We run the
 dual process starting from the vertices of $N$ until time $\lceil
 aR\rceil-1$, so they are independent. With high probability there will be at
 least one vertex $w\in N$ for which $\left|\hat\xi_{\lceil aR\rceil-1}^{\{w\}}\right| \ge \lceil n^B\rceil$.
 By the choice of $N$, for any two distinct vertices $x, z\in
 \hat\xi_{\lceil aR\rceil-1}^{\{w\}}$, $d(x,z) \ge 2\lceil aR\rceil$.
 It seems foolish to pick only
 one vertex $w$, but we do not know how to
 guarantee that the vertices are suitably separated
 if we pick more.

\section{Proof of Theorem \ref{prolongpersist}}

We begin with the proof of the isoperimetric inequality, Theorem \ref{isoper}.

\begin{proof}[{\it Proof of Theorem 2}]
 Let $p(m,k)$ be the
probability that there is a set $U$ with $|U|=m$ and $|U^*|=k$.
First we will estimate $p(m,\ell)$ where
$\ell=\lfloor(r-1-\eta)m\rfloor$.
$$
 p(m,\ell) \le \sum_{\{(U,U'):|U|=m, |U'|=\ell\}} \mathbb P( U^* = U')
\le \sum_{\{(U,U'):|U|=m, |U'|=\ell\}} \mathbb P(U^* \subset U').
$$
According to the construction of $G_n$, for any $x\in U$ the other
ends of the $r$ edges coming out of it are distinct and they are
chosen at random from $V_n\setminus \{x\}$. So
$$
\mathbb P(U^* \subset U') = \left[\frac{{|U'|\choose r}}{{n-1
\choose r}} \right]^{|U|} \le \left( \frac{|U'|}{n-1}
\right)^{r|U|},
$$
and hence \beq p(m,\ell) \le \binom{n}{m} \binom{n}{\ell}
\left(\frac{\ell}{n-1}\right)^{rm}. \label{pml} \eeq

To bound the right-hand side, we use the trivial bound \beq
\binom{n}{m} \le \frac{n^m}{m!} \le \left(\frac{n e}{m}\right)^m,
\label{bcbdd} \eeq where the second inequality follows from $e^m >
m^m/m!$. Using (\ref{bcbdd}) in (\ref{pml})
$$
p(m,\ell) \le (ne/m)^{m} (ne/\ell)^{\ell}
\left(\frac{\ell}{n}\right)^{rm}\left(\frac{n}{n-1}\right)^{rm}.
$$
Recalling $\ell \le (r-1-\eta)m$, and accumulating the terms
involving $(m/n), r-1-\eta$ and $e$ the last expression becomes
 \beqax & \le & e^{m(r-\eta)} (m/n)^{m[-1-(r-1-\eta)+r]}
 (r-1-\eta)^{-(r-1-\eta)m+rm}[n/(n-1)]^{rm}\\
 & = & e^{m(r-\eta)} (m/n)^{m\eta} (r-1-\eta)^{m(1+\eta)}[n/(n-1)]^{rm}.
 \eeqax
 Letting $c(\eta)=r-\eta+ r\log(n/(n-1))+(1+\eta)\log(r-1-\eta)\le C$ for $\eta\in(0,r-1)$, we have
 $$
 p\left(m,\lfloor(r-1-\eta)m\rfloor\right) \le \exp\left(-\eta m\log(n/m) + Cm\right).
 $$
 Summing over integers $k = (r-1-\eta')m$ with $\eta' \ge \eta$, and
 noting that there are fewer than $rm$ terms in the sum, we have
 $$
\mathbb P\left[E(m,(r-1-\eta)m)\right] \le \exp(-\eta m\log(n/m) +
C'm).
$$

To clean up the result to the one given in Theorem \ref{isoper},
choose $\ep_0$ such that $\eta\log(1/\ep_0)/2>C'$. Hence for any
$m\le\ep_0n$,
$$
\eta\log(n/m)/2 \ge \eta\log(1/\ep_0)/2 >C',
$$
which gives the desired result.
\end{proof}

Our next goal is to show that the graph $\hat G_n$ locally looks
like a tree with high probability. For that we explore all the
vertices in $V_n$ one at a time, starting from a vertex $x$, and
using a breadth-first search algorithm based on the distance
function $d_0$ of \eqref{dist}. More precisely, for each $x\in V_n$,
we define the sets $A_x^k$, which we call the active set at the
$k^{th}$ step, and $R_x^k$, which we call the removed set at
$k^{th}$ step, for $k=0,1, \ldots, \beta_x$, where $\beta_x\equiv
\min\{l: A_x^l=\emptyset\}$, sequentially as follows.
$R_x^0\equiv\emptyset$ and $A_x^0\equiv\{x\}$. 
Let $D(x,l) = \{ y : d_0(x,y) \le l \}$. For $0\le k<\beta_x$,
we get $k_0=\min\{l: 0\le l\le k, A_x^k \cap D(x,l) \neq
\emptyset\}$, and choose $x_k \in A_x^k \cap D(x,k_0)$ with the
minimum index.
 \beqax
 &\text{ If } x_k \in R_x^k, & \text{ then }
 A_x^{k+1} \equiv A_x^k \setminus \{x_k\}, R_x^{k+1}\equiv R_x^k \text{ and }\\
 &\text{ if } x_k \not\in R_x^k, & \text{ then }
 A_x^{k+1}\equiv A_x^k \cup \left\{y_1(x_k), \ldots, y_r(x_k)\right\} \setminus \{x_k\},
 R_x^{k+1}\equiv R_x^k \cup \{x_k\}.
 \eeqax
 If $x_k\in R_x^k$, we say that a collision has occurred while exploring $\hat G_n$ starting from $x$.
 The choice of $x_k$ ensures that while exploring the graph
 starting from $x$, for any $j \ge 1$, we consider the vertices, which are at $d_0$
 distance $j$ from $x$, prior to those, which are at $d_0$ distance
 $j+1$ from $x$.

The next Lemma shows that with high probability $R_x^k$ will have
$k$ vertices, and for $x \neq z$, $R_x^k$ and $R_z^k$ do not
intersect each other, when $k \le n^{1/2-\delta}$. For the lemma we
need the following stopping times.
 \begin{align}\label{pi}
 & \pi_x^1 \equiv \min\left\{l\ge 1: |R_x^l|<l\right\}, \notag\\
 & \pi_{x,z} \equiv \min\left\{l\ge 1: R_x^l\cap R_z^l\neq \emptyset\right\}, x\neq z, \notag\\
 & \alpha_x^{n,\delta} \equiv \min\left\{l\ge 1: |R_x^l|\ge \lceil n^{1/2-\delta}\rceil\right\},
 \delta<1/2, \\
 & \beta_x=\min\left\{l\ge 1:A_x^l = \emptyset\right\}\notag
\end{align}
 So $\pi_x^1$ is the time of first collision while exploring $\hat
 G_n$ starting from $x$, and $\pi_{x,z}$ is the time of first
 collision while exploring $\hat G_n$ simultaneously from $x$ and
 $z$.
\begin{lemma}\label{growth}
Suppose $0<\delta<1/2$. Let $I_x^1$, $x\in V_n$, and $I_{x,z}$,
$x,z\in V_n, x\neq z$, be the events
 \beqx I_x^1 \equiv \left\{\pi_x^1 \wedge \beta_x \ge
\alpha_x^{n,\delta}\right\},\quad I_{x,z} \equiv I_x^1 \cap I_z^1
\cap \left\{\pi_{x,z} \ge \alpha_x^{n,\delta} \vee
\alpha_z^{n,\delta}\right\}, \eeqx where $\pi_x^1, \pi_{x,z},
\alpha_x^{n,\delta}$ and $\beta_x$ are the stopping times defined in
\eqref{pi}. Then
 \beq \mathbb P\left[\left(I_x^1\right)^c\right]\le
 n^{-2\delta},\quad \mathbb P(I_{x,z}^c) \le 5n^{-2\delta}
 \label{pnointer} \eeq for large enough $n$.
 \end{lemma}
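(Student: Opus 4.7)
The plan is to exploit the independence structure of $\hat G_n$: because $G_n$ is uniform over directed graphs with in-degree $r$ at every vertex, the sets $\{y_1(v),\ldots,y_r(v)\}$, as $v$ ranges over $V_n$, are independent uniform $r$-subsets of $V_n\setminus\{v\}$ (these are the out-neighbor sets in $\hat G_n$). Consequently, the first time the BFS processes $x_k$ (the case $x_k\notin R_x^k$), the revealed vertices $y_1(x_k),\ldots,y_r(x_k)$ form a fresh uniform $r$-subset of $V_n\setminus\{x_k\}$, independent of everything previously exposed. This ``one vertex at a time reveal'' principle is what drives all the collision estimates.

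For the single-source bound, note that if no collision has occurred by step $k$ then $|R_x^k|=k$ by definition of $\pi_x^1$, and an easy induction shows $|A_x^k|\ge 1+(r-1)k$, so $\beta_x>k$ automatically when $r\ge 2$. Hence $(I_x^1)^c\subseteq\{\pi_x^1<\lceil n^{1/2-\delta}\rceil\}$. A collision corresponds to some earlier revealed $y_i$ having landed in $R$ (a back edge); conditional on no prior collision and on the history through step $k$, the probability that any of $y_1(x_k),\ldots,y_r(x_k)$ lies in $R_x^k$ is at most $r|R_x^k|/(n-1)\le rk/(n-1)$ by the uniform reveal and a union bound. Summing,
\[
\mathbb P\bigl((I_x^1)^c\bigr)\le\sum_{k=0}^{\lceil n^{1/2-\delta}\rceil-1}\frac{rk}{n-1}=O\bigl(n^{-2\delta}\bigr),
\]
which is at most $n^{-2\delta}$ once $n$ is large enough.

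For the joint bound, decompose
\[
\mathbb P(I_{x,z}^c)\le\mathbb P\bigl((I_x^1)^c\bigr)+\mathbb P\bigl((I_z^1)^c\bigr)+\mathbb P\bigl(\{\pi_{x,z}<\alpha_x^{n,\delta}\vee\alpha_z^{n,\delta}\}\cap I_x^1\cap I_z^1\bigr).
\]
The first two terms give $2n^{-2\delta}$. For the third, run the two BFS explorations sequentially. On $I_x^1$ the exposed set $T_x:=R_x^{\alpha_x^{n,\delta}}\cup A_x^{\alpha_x^{n,\delta}}$ has size at most $1+r\lceil n^{1/2-\delta}\rceil$; then, since $z$'s out-neighborhoods in $\hat G_n$ are independent of $x$'s revelations, every fresh reveal in $z$'s exploration is still a uniform $r$-subset, and the probability it hits $T_x\cup R_z^j$ is at most $r(|T_x|+j)/(n-1)$. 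Summing over $j<\lceil n^{1/2-\delta}\rceil$ gives $O(n^{-2\delta})$, at most $3n^{-2\delta}$ for large $n$, and combining yields $\mathbb P(I_{x,z}^c)\le 5n^{-2\delta}$.

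The main obstacle is the measurability bookkeeping behind the fresh-reveal claim: one must check that both the event $\{x_k\notin R_x^k\}$ and the deterministic choice of $x_k$ itself depend only on the edges already exposed, so that conditioning on the BFS history leaves $(y_1(x_k),\ldots,y_r(x_k))$ uniform on $r$-subsets of $V_n\setminus\{x_k\}$. Once this is set up, everything reduces to birthday-paradox-style sums in which the total number of revealed vertices from each source is at most $O(n^{1/2-\delta})$, and the claimed bounds fall out.
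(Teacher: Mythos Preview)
Your approach is the paper's: reveal out-neighbourhoods one vertex at a time and apply a birthday-type union bound. The paper phrases the single-source part as $\mathbb P(|R_x^k|=|R_x^{k-1}|)\le (k-1)/(n-r)$ and, for the joint bound, runs the two explorations in parallel rather than sequentially as you do; these are organisational differences, not mathematical ones.

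There is one genuine slip. Your claim that ``an easy induction shows $|A_x^k|\ge 1+(r-1)k$'' under no collision is false. A revealed neighbour can land in the \emph{active} set $A_x^j$ (a cross-edge) without ever producing a collision in the paper's sense---collisions are only detected when a vertex already in $R$ is picked from $A$---and each such cross-edge makes $|A_x^{j+1}|<|A_x^j|+r-1$. For instance with $r=3$, if $y_1(x_1),y_2(x_1)\in\{y_1(x),y_2(x),y_3(x)\}=A_x^1$, then $|A_x^2|=3<5$ yet $\pi_x^1>2$. The conclusion you need ($\beta_x>k$) is still correct, via a simpler argument: if $\pi_x^1>k$ then every step $j<k$ is a reveal step, and since $y_1(x_j),\dots,y_r(x_j)$ are $r$ distinct vertices all different from $x_j$, one has $A_x^{j+1}\supseteq\{y_1(x_j),\dots,y_r(x_j)\}$, hence $|A_x^{j+1}|\ge r\ge 1$. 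This is exactly what the paper uses implicitly when it writes ``It is easy to check that $\pi_x^1\wedge\beta_x\ge\alpha_x^{n,\delta}$ if $|R_x^k|\neq|R_x^{k-1}|$ for $k=1,\dots,\lceil n^{\delta'}\rceil$''.

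A smaller point on the joint bound: your sequential scheme does not literally cover the case where the starting vertex $z$ itself lies in $R_x^{\alpha_x^{n,\delta}}$ (this is an intersection of $R_x^l$ and $R_z^l$ not witnessed by any fresh reveal in $z$'s exploration). Its probability is $O(n^{\delta'-1})=o(n^{-2\delta})$ and is easily absorbed into the constant $3$ you allow.
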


 Note that the randomness, which determines whether the events $I_x^1$ and $I_{x,z}$ occur or not,
 arises only from the construction of the random graph $G_n$, and
 does not involve the threshold contact process $\boldsymbol{\xi}^1$
 on $G_n$.

 \begin{proof} Let $\delta'=1/2-\delta$. Since in the construction of the random graph $G_n$
 the input nodes $y_i(z), 1\le i\le r$, for any vertex $z$ are distinct and
 different from $z$, there are at least $n-r$ choices for each $y_i(z)$. Also  $\left| R_x^l\right| \le l$ for any
 $l$. So
 \beq\label{eq4} \mathbb P(|R_x^k|=|R_x^{k-1}|) \le (k-1)/(n-r).\eeq
 It is easy to check that $\pi_x^1\wedge \beta_x \ge \alpha_x^{n,\delta}$ if $|R_x^k| \neq |R_x^{k-1}|$ for $k=1, 2, \ldots, \lceil n^{\delta'} \rceil$. So
\begin{align*}
 \mathbb P\left[\left(I_x^1\right)^c\right]
 &\le  \mathbb P\left[ \cup_{k=1}^{\lceil n^{\delta'}\rceil } \left(\left|R_x^k\right|=\left|R_x^{k-1}\right|\right)\right]
  \le \sum_{k=1}^{\lceil n^{\delta'}\rceil} \mathbb  P\left(|R_x^k|=|R_x^{k-1}|\right) \\
 & \le \sum_{k=1}^{\lceil n^{\delta'}\rceil}  (k-1)/(n-r) \le n^{2\delta'}/n=n^{-2\delta}
\end{align*}
  for large enough $n$.
 For the other assertion, note that $I_{x,z}$ occurs if $|R_x^k| \neq
 |R_x^{k-1}|, |R_z^k| \neq |R_z^{k-1}|$ and $R_x^k \cap R_z^k =
 \emptyset$ for $k=1, 2, \ldots, \lceil n^{\delta'} \rceil$. Also if
 for some $k \ge 1$ $R_x^k \cap R_z^k \neq \emptyset$ and  $R_x^l \cap R_z^l = \emptyset$
 for all $1\le l <k$, then either $R_x^k=R_x^{k-1} \cup \{x_{k-1}\}$
 and $x_{k-1} \in R_z^{k-1}$, or $R_z^k=R_z^{k-1} \cup \{z_{k-1}\}$ and $z_{k-1} \in R_x^k$.
 Now since each of the input
 nodes in the construction of $G_n$ has at least $n-r$ choices,
 and $|R_x^l|, |R_z^l|\le l$ for any $l$,
 \beq\label{eq5} \mathbb P \left(R_x^k \cap R_z^k \neq \emptyset, R_x^l \cap R_z^l =
 \emptyset, 1\le l<k\right) \le \mathbb P\left(x_{k-1}\in R_z^{k-1}\right) + \mathbb P\left(z_{k-1} \in R_x^k\right)
 \le (2k-1)/(n-r).\eeq
 Combining the error probabilities of \eqref{eq4} and \eqref{eq5}
 \begin{align*} 
& \mathbb P \left(I_{x,z}^c\right)
  \le  \mathbb P \left[ \cup_{k=1}^{\lceil n^{\delta'} \rceil} \left(\left|R_x^k\right| =
 \left|R_x^{k-1}\right|\right)\cup_{k=1}^{\lceil n^{\delta'} \rceil} \left(\left|R_z^k\right| =
 \left|R_z^{k-1}\right|\right) \cup_{k=1}^{\lceil n^{\delta'} \rceil} \left(R_x^k
 \cap R_z^k \neq \emptyset\right)\right]\\
 & \le  \sum_{k=1}^{\lceil n^{\delta'}\rceil} \left[ \mathbb P\left(\left|R_x^k\right| =
 \left|R_x^{k-1}\right|\right) + \mathbb P\left(|R_z^k| = |R_z^{k-1}|\right)
 + \mathbb P \left(R_x^k \cap R_z^k \neq \emptyset, R_x^l \cap R_z^l =
 \emptyset, 1\le l<k\right)\right]\\
 & \le  \sum_{k=1}^{\lceil n^{\delta'}\rceil} (4k-3)/(n-r) \le
 5n^{2\delta'-1}=5n^{-2\delta}
  \end{align*}
  for large $n$.
 \end{proof}

Lemma \ref{growth} shows that $\hat G_n$ is locally tree-like. The
number of vertices in the induced subgraph $\hat G_{x,M}$ with vertex set $G_n \cap \{u: d_0(x,u) \le M\}$ 
is at most $1+r+\cdots +r^M \le 2r^M$. So if
$I_x^1$ occurs, then, for any $M$ satisfying $2r^M \le
n^{1/2-\delta}$, the subgraph $\hat G_{x,M}$ is an oriented finite $r-$tree, where each vertex
except the leaves has out-degree $r$. Similarly if $I_{x,z}$ occurs,
then for any such $M$, $\hat G_{x,M} \cap \hat G_{z,M} = \emptyset$.

In the next lemma, we will use this to get a bound on the survival
of the dual process for small times. Let $\rho$ be the branching
process survival probability defined in \eqref{bpsurvp}.

\begin{lemma}\label{startup}
If $q>1/r$, $\delta\in(0,qr-1)$, $\gamma=(20\log r)^{-1}$, and $b =
\gamma\log(qr-\delta)$ then for any $x\in V_n$, if $n$ is large,
$$
\mathbf P\left(\left|\hat\xi_{\lceil 2\gamma\log
n\rceil}^{\{x\}}\right|\ge \lceil n^b\rceil\right)\ge \rho-\delta.
$$
\end{lemma}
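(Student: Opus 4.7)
\textbf{Proof proposal for Lemma \ref{startup}.}

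The plan is to compare the dual process $\hat{\boldsymbol{\xi}}^{\{x\}}$ up to time $M \equiv \lceil 2\gamma\log n\rceil$ with a Galton--Watson branching process, using Lemma \ref{growth} to guarantee that $\hat G_n$ looks locally like a tree and then invoking standard supercritical branching-process theory to control the size at time $M$.

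First I would set up the local tree structure. With $\gamma = (20\log r)^{-1}$ one has $r^M \le r \cdot n^{2\gamma\log r} = r\cdot n^{1/10}$, so $2r^M \le n^{1/4} = n^{1/2 - 1/4}$ for $n$ large. Applying Lemma \ref{growth} with parameter $1/4$ yields $\mathbb P(I_x^1) \ge 1 - n^{-1/2}$, and on $I_x^1$ the BFS exploration visits at least $2r^M \ge |D(x,M)|$ vertices without collision, so the induced subgraph $\hat G_{x,M}$ on $D(x,M)$ is a rooted $r$-ary tree of depth $M$. Since $\hat\xi_t^{\{x\}} \subseteq D(x,t) \subseteq D(x,M)$ for $t\le M$, the dual process runs entirely on this tree, and because the Bernoulli$(q)$ birth coins of the dual are independent of $\hat G_n$, conditional on $I_x^1$ the sequence $(|\hat\xi_t^{\{x\}}|)_{t\le M}$ has exactly the distribution of a Galton--Watson process $(Z_t)_{t\le M}$ with $Z_0=1$ and offspring law $\mathbf P(\zeta = r)=q=1-\mathbf P(\zeta=0)$. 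It therefore suffices to show $\mathbf P(Z_M \ge \lceil n^b\rceil) \ge \rho - \delta/2$.

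For the branching-process estimate, let $\mu = qr > 1$. Because $\zeta$ is bounded, the martingale $W_t = Z_t/\mu^t$ is $L^2$-bounded and converges a.s.\ to a limit $W$ with $\{W>0\}=\{\text{non-extinction}\}$ a.s.\ and $\mathbf P(W>0)=\rho$ (Kesten--Stigum, or Athreya--Ney 1972). Choose $\ep>0$ small enough that $\mathbf P(W>\ep) \ge \rho - \delta/3$; since $\mathbf P(W=\ep)=0$, a.s.\ convergence gives $\mathbf P(Z_M \ge \ep\mu^M) \ge \rho - \delta/2$ for $n$ large. Finally
$$
\frac{\log\mu^M}{\log n^b} \longrightarrow \frac{2\log(qr)}{\log(qr-\delta)} > 2
$$
(using $qr>1$ so that $(qr)^2 > qr > qr - \delta$), so $\ep\mu^M$ exceeds $n^b$ by a polynomial factor. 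Combining with $\mathbb P(I_x^1)\to 1$ produces
$$
\mathbf P\bigl(|\hat\xi_M^{\{x\}}|\ge \lceil n^b\rceil\bigr) \ge (1-n^{-1/2})(\rho-\delta/2) \ge \rho - \delta.
$$

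The only point needing care is the parameter balance: the choice $\gamma=(20\log r)^{-1}$ makes $r^M \approx n^{1/10}$, just small enough that Lemma \ref{growth}'s tree guarantee applies at depth $M$, yet $\mu^M = n^{\log(qr)/(10\log r)}$ still dominates $n^b = n^{\log(qr-\delta)/(20\log r)}$ by a definite polynomial factor. Everything else is a direct application of Lemma \ref{growth} and classical supercritical branching-process asymptotics.
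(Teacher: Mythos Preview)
Your proof is correct and follows the same overall architecture as the paper: invoke Lemma~\ref{growth} (with parameter $1/4$) so that $\hat G_{x,M}$ is a perfect $r$-ary tree up to depth $M=\lceil 2\gamma\log n\rceil$, couple $(|\hat\xi_t^{\{x\}}|)_{t\le M}$ exactly with a Galton--Watson process $(Z_t)$ having offspring law $p_r=q,\;p_0=1-q$, and then bound $P(Z_M\ge \lceil n^b\rceil)$ from below by roughly $\rho$.

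The only real difference is in the branching-process step. The paper uses Athreya's (1994) conditional large-deviation estimate
\[
P_{Z^x}\bigl(\,|Z_{t+1}/Z_t - qr|>\delta \,\bigm|\, \text{survival}\,\bigr)\le e^{-c(\delta)t}
\]
on the window $\lfloor\gamma\log n\rfloor\le t<\lceil 2\gamma\log n\rceil$, which yields exactly $Z_M\ge (qr-\delta)^{\gamma\log n}=n^b$ on a good event. You instead use the $L^2$ martingale limit $W=\lim Z_t/(qr)^t$ with $\{W>0\}=\{\text{survival}\}$, pick $\ep$ with $P(W>\ep)\ge\rho-\delta/3$, and observe that $\ep(qr)^M$ beats $n^b$ by a polynomial factor since $2\log(qr)>\log(qr-\delta)$. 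Your route is more elementary (no Athreya citation needed) and exploits the slack between $(qr)^{2\gamma\log n}$ and $n^b=(qr-\delta)^{\gamma\log n}$; the paper's route gives pointwise control of the one-step growth ratio, which is not needed here but is reused verbatim in Lemma~\ref{dualweakpersist}. Either argument suffices for the present lemma.
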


\begin{proof} Let $I_x^1$ be the event
$$
I_x^1=\left\{\pi_x^1 \wedge \beta_x \ge \alpha_x^{n,1/4}\right\},
$$
 where $\pi_x^1, \beta_x, \alpha_x^{n,1/4}$ are as in \eqref{pi}.
 Let $P_{Z^x}$ be the distribution of a branching process $\mathbf Z^x \equiv\{Z_t^x: t=0, 1, 2, \ldots\}$
 with $Z_0^x=1$ and offspring distribution $p_0=1-q$ and
$p_r=q$. Since $q>1/r$, this is a supercritical branching process.
Let $B_x$ be the event that the branching process survives. Then
 \beqx P_{Z^x}(B_x)= \rho,\eeqx
 where $\rho$ is as in \eqref{bpsurvp}. If we condition on $B_x$, then, using a large deviation result for
branching processes from Athreya (1994), \beq \label{brpldp}
P_{Z^x}\left(\left.\left|\frac{Z_{t+1}^x}{Z_t^x}-qr\right|>\delta
\right| B_x\right)\le e^{-c(\delta) t} \eeq for some constant
$c(\delta)>0$ and for large enough $t$. So if $F_x=\{ Z_{t+1}^x \ge
(qr-\delta)Z_t^x \hbox{ for } \lfloor \gamma \log n\rfloor \le t <
\lceil 2\gamma\log n\rceil \}$, then
 \beq \label{finq}
 P_{Z^x}(F_x^c|B_x)\le\sum_{t=\lfloor \gamma\log n\rfloor}^{(\lceil 2\gamma \log n\rceil)-1}
e^{-c(\delta) t}\le C_\delta n^{-c(\delta)\gamma/2} \eeq
 for some constant $C_\delta>0$ and for large
enough $n$. On the event $B_x\cap F_x$,
 $$Z_{\lceil 2\gamma\log n\rceil}^x\ge
 (qr-\delta)^{\lceil 2\gamma \log n\rceil -\lfloor \gamma\log
 n\rfloor} \ge (qr-\delta)^{\gamma\log n}=n^{\gamma\log(qr-\delta)},$$ since
$Z_{\lfloor \gamma \log n\rfloor}^x \ge 1$ on $B_x$.

Now coming back to the dual process $\hat{\boldsymbol \xi}^{\{x\}}$,
let $P_{I_x^1}$ denotes the conditional distribution of
$\hat{\boldsymbol{\xi}}^{\{x\}}$ given $I_x^1$. This
does not specify the entire graph but we will only use the conditional law for events
that involve the process on the subtree whose existence is guaranteed by $I_x^1$.
By the choice of
$\gamma$, the number of vertices in the subgraph induced by $\{u:
d_0(x,u) \le \lceil 2\gamma \log n\rceil\}$ is at most $2r^{\lceil
2\gamma \log n\rceil}<n^{1/4}$. Then it is easy to see that we can
couple $P_{I_x^1}$ with $P_{Z^x}$ so that
 \beqx P_{I_x^1}\left[\left(\left|\hat\xi_t^{\{x\}}\right|,0\le t\le \lceil 2\gamma\log n\rceil\right) \in
 \cdot\right]=P_{Z^x}\left[\left(Z_t^x,0\le t\le \lceil 2\gamma\log n\rceil\right) \in
 \cdot\right].\eeqx
 Combining the error probabilities of
\eqref{pnointer} and \eqref{finq}
 \beqax
 \mathbf P\left(\left|\hat\xi_{\lceil 2\gamma\log n\rceil}^{\{x\}}\right|\ge
 \lceil n^b\rceil\right)
 & \ge & P_{I_x^1}\left(\left|\hat\xi_{\lceil 2\gamma\log n\rceil}^{\{x\}}\right|\ge
 \lceil n^b\rceil\right) \mathbb P(I_x^1)\\
 & = & P_{Z^x}\left(Z_{\lceil 2\gamma \log n\rceil}^x\ge
 \lceil n^b\rceil\right)\mathbb P(I_x^1)\\
 & \ge & P_{Z^x}(B_x\cap F_x) \mathbb P(I_x^1)\\
 & = & P_{Z^x}(B_x) P_{Z^x}(F_x | B_x) \mathbb P(I_x^1)\\
 & \ge & \rho \left(1-C_\delta n^{-c(\delta)\gamma/2}\right) \left(1-n^{-1/2}\right) \ge
 \rho-\delta
 \eeqax
 for large enough $n$.
\end{proof}

Lemma \ref{startup} shows that the dual process starting from one
vertex will with probability $\ge\rho-\delta$ survive until there
are $\lceil n^b \rceil$ many occupied sites. The next lemma will
show that if the dual starts with $\lceil n^b \rceil$ many occupied
sites, then for some $\ep>0$ it will have $\lceil \ep n \rceil$ many
occupied sites with high probability.

\begin{lemma}\label{carryon}
If $q(r-1)>1$, then there exists $\ep_1 >0$ such that  for any $A$
with $|A| \ge \lceil n^b \rceil$ the dual process $\hat{\boldsymbol
\xi}^A$ satisfies
$$
\mathbf P\left(\max_{t\le \left\lceil \ep_1n-n^b
\right\rceil}\left|\hat\xi_t^A\right|<\ep_1
n\right)\le\exp\left(-n^{b/4}\right).
$$
\end{lemma}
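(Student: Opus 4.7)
The strategy is to combine the isoperimetric inequality (Theorem~\ref{isoper}) with binomial concentration to show that, starting from $|A|\ge \lceil n^b\rceil$ sites, the size $|\hat\xi_t^A|$ grows by a factor of at least $1+\delta'$ each step, and therefore reaches $\epsilon_1 n$ after only $O(\log n)$ steps, well within the allowed $\lceil \epsilon_1 n - n^b\rceil$. Since $q(r-1)>1$, fix $\eta>0$ small enough that $(q-\eta)(r-1-\eta)\ge 1+\delta'$ for some $\delta'>0$, let $\epsilon_0=\epsilon_0(\eta)$ be as in Theorem~\ref{isoper}, and set $\epsilon_1 := \epsilon_0/(1+\delta')$.

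First I would introduce the graph-level event $\Omega_0$ that $|U^*|\ge (r-1-\eta)|U|$ for every $U\subset V_n$ with $(q-\eta)\lceil n^b\rceil \le |U| \le \epsilon_0 n$. By Theorem~\ref{isoper} and a union bound over the relevant values of $m$, using that $\log(n/m)\ge \log(1/\epsilon_0)$ on this range, $\mathbb P(\Omega_0^c) \le n\exp(-c_1 n^b) \le \tfrac12 \exp(-n^{b/4})$ for large $n$, where $c_1 := \eta(q-\eta)\log(1/\epsilon_0)/2$. Under $P_G$ the dual evolves as $\hat\xi_{t+1}^A = S_t^*$, where $S_t := \{x\in \hat\xi_t^A : B_x=1\}$ and the $B_x$ are i.i.d.\ Bern$(q)$ independent of the graph.

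Given $N_t := |\hat\xi_t^A|$ with $\lceil n^b\rceil \le N_t \le \epsilon_1 n$, Hoeffding gives $P_G(|S_t| < (q-\eta)N_t \mid \hat\xi_t^A) \le \exp(-2\eta^2 n^b)$. On $\Omega_0$ intersected with the Hoeffding-good event, $|S_t|$ lies in the range controlled by $\Omega_0$, so $|S_t^*| \ge (r-1-\eta)|S_t| \ge (1+\delta')N_t$. Iterating, $N_t \ge (1+\delta')^t\lceil n^b\rceil$ until $N_t \ge \epsilon_1 n$, which occurs within $T^* := \lceil \log(\epsilon_1 n/n^b)/\log(1+\delta')\rceil = O(\log n)$ steps. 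Union-bounding the Hoeffding failures over these $T^*$ steps and combining with $\mathbb P(\Omega_0^c)$ yields
\[
\mathbf P\Bigl(\max_{t\le \lceil\epsilon_1 n - n^b\rceil} |\hat\xi_t^A| < \epsilon_1 n\Bigr) \le \mathbb P(\Omega_0^c) + T^* \exp(-2\eta^2 n^b) \le \exp(-n^{b/4})
\]
for large $n$. The main delicate point is that the random sets $S_t$ to which we apply Theorem~\ref{isoper} depend on the graph through $\hat\xi_t^A$, so one must work on a uniform graph event $\Omega_0$ that controls \emph{all} subsets of the relevant sizes, rather than attempting a per-instance application of the isoperimetric bound; the rest is bookkeeping.
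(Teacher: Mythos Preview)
Your proposal is correct and follows essentially the paper's approach: combine Theorem~\ref{isoper} with binomial concentration to force growth of $|\hat\xi_t^A|$ until it reaches $\epsilon_1 n$. The only cosmetic differences are that you package the isoperimetric control as a single upfront graph event $\Omega_0$ and track geometric growth over $O(\log n)$ steps, whereas the paper applies the isoperimetric bound per step (via the inclusion $C_t^c\cap\{n^b/(r-1)\le |U_t|<\epsilon_1 n\}\subset E(|U_t|,(r-1-\eta)|U_t|)$) and records only the cruder increment $|\hat\xi_t^A|\ge|\hat\xi_{t-1}^A|+1$, union-bounding over all $\lceil\epsilon_1 n-n^b\rceil$ steps.
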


\begin{proof} Choose $\eta>0$ such that $(q-\eta)(r-1-\eta)>1$, and let $\ep_0(\eta)$ be the constant in Theorem
\ref{isoper}.  Take $\ep_1\equiv \ep_0(\eta)$. Let $\nu \equiv
\min\left\{t: \left|\hat\xi_t^A\right|\ge \lceil\ep_1n
\rceil\right\}$. Let $F_t \equiv \left\{ \left|\hat\xi_t^A\right|\ge
\left|\hat\xi_{t-1}^A\right|+1\right\}$, and
\begin{align*}
B_t \equiv &\, \left\{ \text{at least $(q-\eta)\left|\hat\xi_t^A\right|$ occupied sites of $\hat\xi_t^A$ give birth} \right\},\\
C_t \equiv & \, \left\{ |U^*_t|\ge(r-1-\eta)|U_t| \right\}, \text{
where } U_t=\left\{x\in\hat\xi_t^A: x \text{ gives birth}\right\}.
\end{align*}
Now if $B_t$ and $C_t$ occur, then
 \beq\label{clarify} \left|\hat\xi_{t+1}^A\right|=|U_t^*|\ge (r-1-\eta)|U_t|\ge (r-1-\eta)(q-\eta)\left|\hat\xi_t^A\right|>\left|
 \hat\xi_t^A\right|,\eeq
 i.e. $F_{t+1}$ occurs. So $F_{t+1} \supseteq B_t \cap C_t$ for all $t\ge 0$. Using the binomial large deviations, see Lemma 2.3.3 on page
40 in Durrett (2007), \beq
P_G\left(\left.B_t\right|\hat\xi_t^A\right)\ge
1-\exp\left(-\Gamma((q-\eta)/q)q \left|\hat\xi_t^A\right|\right),
\label{bld} \eeq where $\Gamma(x)=x\log x-x+1>0$ for $x \neq 1$.
 If we take $H_0 \equiv \left\{\left|\hat\xi_0^A\right| \ge \lceil n^b\rceil\right\}$ and $H_t\equiv \cap_{s=1}^t F_s$, then $\left|\hat\xi_t^A\right| \ge
 \lceil n^b\rceil$ on the event $H_t$ for all $t\ge 0$. Keeping that in mind we can replace $\left|\hat\xi_t^A\right|$ in the right side of
 \eqref{bld} by $n^b$ to have
 \beq \label{bld1} P_G(B_t^c \cap H_t) \le P_G\left(B_t^c \cap \left\{\left|\hat\xi_t^A\right|
\ge \lceil n^b\rceil\right\}\right) \le
\exp\left(-\Gamma((q-\eta)/q)qn^b\right) \quad \forall t\ge 0.
 \eeq
 The same bound also works for the unconditional probability
 distribution $\mathbf P$. Next we see that
 $P_G(C_t | U_t) \ge \mathbf{1}_{E^c}$, where $E=E(|U_t|,
 (r-1-\eta)|U_t|)$, as defined in Theorem \ref{isoper}. Taking
 expectation with respect to the distribution of $G_n$, $\mathbf P(C_t | U_t) \ge \mathbb
 P(E^c)$. Since for $ t<\nu$, $|U_t|<\ep_0(\eta) n$, and
 $|U_t|\ge (q-\eta)n^b \ge n^b/(r-1)$ on $H_t \cap B_t$,
 using Theorem \ref{isoper}
 \begin{align}\label{bld2}
 \mathbf P(C_t^c \cap B_t \cap H_t \cap \{t<\nu\}) 
\le \mathbf P[C_t^c \cap \{(n^b/(r-1)) \le |U_t|< \ep_1 n\}] \notag\\
\le \exp\left(-\frac{\eta}{2}\frac{n^b}{r-1}\log \frac{n(r-1)}{n^b}\right).
 \end{align}
Combining these two bounds of \eqref{bld1} and \eqref{bld2} we get
 \beqax
 \mathbf P(F_{t+1}^c \cap H_t \cap \{t<\nu\})
 & \le & \mathbf P((B_t \cap C_t)^c \cap H_t \cap \{t<\nu\})\\
 & \le & \mathbf P(B_t^c \cap H_t) + \mathbf P(C_t^c \cap B_t \cap
 H_t \cap \{t<\nu\})
  \le \exp\left(-n^{b/2}\right)
 \eeqax
 for large $n$.
 Since $\nu\le \lceil\ep_1 n-n^b
 \rceil$ on $H_{\lceil \ep_1n-n^b \rceil}$,
 \beqax
 \mathbf P\left(\nu>\lceil \ep_1n-n^b \rceil\right)
 & \le & \mathbf P\left[ \left(\nu>\lceil \ep_1n-n^b \rceil\right)
 \cap \left(\cup_{t=1}^{\lceil \ep_1 n-n^b \rceil}
 F_t^c\right)\right]\\
 & \le & \sum_{t=1}^{\lceil \ep_1 n-n^b \rceil} \mathbf P(F_t^c\cap H_{t-1} \cap
 \{\nu>t-1\})\\
 & \le & (\lceil \ep_1 n-n^b\rceil)\exp\left(-n^{b/2}\right) \le
 \exp\left(-n^{b/4}\right)
 \eeqax
 for large $n$ and we get the result.
 \end{proof}

The next result shows that if there are $\lceil \ep n \rceil$ many
occupied sites at some time for some $\ep>0$, then the dual process
survives for at least $\exp(cn)$ units of time for some constant
$c$.

\begin{lemma}\label{dualpersist}
If $q(r-1)>1$, then there exist constants $c > 0$ and $\ep_1>0$ as
in Lemma \ref{carryon} such that for $T=\exp(cn)$ and any $A$ with
$|A|\ge \lceil \ep_1 n \rceil$,
$$
\mathbf P\left(\inf_{t\le
T}\left|\hat\xi_t^A\right|<\ep_1n\right)\le 2\exp(-cn).
$$
\end{lemma}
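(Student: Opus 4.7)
My plan is to reduce the persistence statement to a one-step drop bound and then iterate via the Markov property. Specifically, I would first establish that there exist constants $c', \delta > 0$ such that, for every $B \subseteq V_n$ with $|B| \geq \epsilon_1 n$,
$$
\mathbf P\left(|\hat\xi_1^B| < \epsilon_1 n \mid \hat\xi_0^B = B\right) \leq \exp(-c'n).
$$
Granted this, the Markov property yields
$$
\mathbf P\left(\inf_{t \leq T}|\hat\xi_t^A| < \epsilon_1 n\right) \leq \sum_{t=1}^{T} \mathbf P\left(|\hat\xi_t^A| < \epsilon_1 n,\ |\hat\xi_{t-1}^A| \geq \epsilon_1 n\right) \leq T\exp(-c'n),
$$
which is at most $2\exp(-cn)$ for $T = \exp(cn)$ and any $c < c'$ once $n$ is large.

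For the one-step bound, I would view $|\hat\xi_1^B|$ as a function of the $|B|$ independent pairs $(I_x, T_x)_{x \in B}$, where $I_x \sim \mathrm{Bernoulli}(q)$ is the birth indicator at $x$ and $T_x = \{y_1(x),\dots,y_r(x)\}$ is the uniformly random $r$-target-set; changing any one pair shifts $|\hat\xi_1^B|$ by at most $r$. McDiarmid's inequality therefore gives
$$
\mathbf P\left(|\hat\xi_1^B| < \mathbf E|\hat\xi_1^B| - \delta n\right) \leq 2\exp(-2\delta^2 n/r^2).
$$
The annealed mean satisfies $\mathbf E|\hat\xi_1^B| \geq n\left(1 - (1-qr/(n-1))^{|B|}\right) - O(1) \geq n(1 - e^{-qr\epsilon_1}) - O(1)$, and since $qr > 1$ (implied by $q(r-1) > 1$), one may choose $\epsilon_1$ small enough that $1 - e^{-qr\epsilon_1} > \epsilon_1 + 2\delta$ for some $\delta > 0$; this closes the argument when $B$ is regarded as fixed.

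The main obstacle is that a single graph $G_n$ governs all $T$ steps, so conditioning on the history $\mathcal F_{t-1}$ in the iteration reveals partial information about $G_n$ and the annealed one-step bound cannot simply be reinserted at each step. To handle this I would first prove, with probability $\geq 1 - \exp(-c_1 n)$ over $G_n$, a uniform ``one-step expansion'' property: for every $B$ with $|B| \geq \epsilon_1 n$,
$$
\sum_{y \in V_n}\left(1 - (1-q)^{N_y(B)}\right) \geq \epsilon_1 n + 2\delta n,
$$
where $N_y(B) = |\{x \in B : y \in T_x\}|$. On this event the conditional mean $\mathbf E[\,|\hat\xi_t^A|\mid\mathcal F_{t-1}\,]$ exceeds $\epsilon_1 n + 2\delta n$ whenever $|\hat\xi_{t-1}^A| \geq \epsilon_1 n$, and McDiarmid applied to only the fresh birth coins $(I_x^{(t)})_{x \in \hat\xi_{t-1}^A}$ delivers the one-step bound at each step; absorbing $\exp(-c_1 n)$ into the final estimate then yields $2\exp(-cn)$. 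Verifying the expansion property reduces to concentration of $|B^*|$ combined with a union bound over subsets $B$, which forces $\epsilon_1$ to lie in an appropriate range that must be reconciled (by shrinking if necessary) with the $\epsilon_1 = \epsilon_0(\eta)$ inherited from Lemma~\ref{carryon}.
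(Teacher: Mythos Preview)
Your overall architecture---establish a one-step drop bound and iterate via the Markov property---is exactly the paper's, and your recognition that the quenched graph prevents re-inserting the annealed McDiarmid bound at each step is correct. The paper likewise separates graph randomness from the birth coins: it writes $F_{t+1}\supseteq B_t\cap C_t$, where $B_t$ is the event that at least $(q-\eta)|\hat\xi_t^A|$ sites give birth (a binomial large-deviation bound on the coins alone) and $C_t$ is the event that the truncated set $U_t$ of births satisfies $|U_t^*|\ge(r-1-\eta)|U_t|$ (a pure graph event, handled by Theorem~\ref{isoper}).

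The gap is in your proposed verification of the uniform expansion property via ``concentration of $|B^*|$ combined with a union bound.'' A bounded-differences argument for $|B^*|$ (or for $\sum_y(1-(1-q)^{N_y(B)})$) over the $m=\epsilon_1 n$ independent target sets $(T_x)_{x\in B}$ gives concentration only of order $\exp(-c\,\epsilon_1 n)$, since each coordinate moves the quantity by $O(r)$ while the deviation you need is $\Theta(\epsilon_1 n)$. The union bound over the $\binom{n}{\epsilon_1 n}$ choices of $B$ contributes $\exp(H(\epsilon_1)n)$ with $H(\epsilon_1)\sim\epsilon_1\log(1/\epsilon_1)$, which dominates $c\,\epsilon_1$ for small $\epsilon_1$; a direct check shows the comparison also fails for $\epsilon_1$ of order one. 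So shrinking $\epsilon_1$ does not help---it makes the mismatch worse. What rescues the argument is precisely Theorem~\ref{isoper}: by computing $\mathbb P(U^*\subseteq U')$ \emph{exactly} rather than via bounded differences, one gets a per-pair bound strong enough that the final exponent is $\tfrac{\eta}{2}m\log(n/m)=\tfrac{\eta}{2}\epsilon_1 n\log(1/\epsilon_1)$, which matches the entropy. Once you invoke Theorem~\ref{isoper} (and truncate to $|U_t|\le\epsilon_1 n$ so it applies), your expansion lower bound follows from $q|U_t^*|\ge q(r-1-\eta)(q-\eta)\epsilon_1 n>\epsilon_1 n$, and the rest of your plan goes through---but that is then essentially the paper's proof, with your McDiarmid-over-coins step playing the role of the paper's binomial bound for $B_t$.
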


\begin{proof} Choose $\eta>0$ so that
$(q-\eta)(r-1-\eta)>1$, and then choose $\ep_0(\eta)>0$ as in
Theorem \ref{isoper}. Take $\ep_1=\ep_0(\eta)$. For any $A$ with
$|A|\ge \lceil \ep_1 n \rceil$, let $U'_t=\left\{x\in\hat\xi_t^A: x
\text{ gives birth}\right\}$, $t=0, 1, \ldots$. If $|U'_t|\le\lfloor
\ep_1 n\rfloor$, then take $U_t=U'_t$. If $|U'_t|>\ep_1 n$, we have
too many vertices to use Theorem \ref{isoper}, so we let $U_t$ be
the subset of $U'_t$ consisting of the $\lfloor \ep_1n \rfloor$
vertices with smallest indices. Let
\begin{align*}
 F_t= & \, \left\{ \left|\hat\xi_t^A\right| \ge \lceil \ep_1n \rceil \right\}, \qquad H_t = \cap_{s=0}^t F_s, \\
 B_t= & \, \left\{ \text{at least $(q-\eta)\left|\hat\xi_t^A\right|$ many occupied sites of $\hat\xi_t^A$ give birth} \right\}, \\
 C_t= & \, \{|U^*_t|\ge(r-1-\eta)|U_t| \}.
\end{align*}
Now using an argument similar for the one for \eqref{clarify},
$F_{t+1}\cap H_t \supset B_t\cap C_t \cap H_t$ for any $t \ge 0$.
Using our binomial large deviations result (\ref{bld}) again,
$P_G\left(\left. B_t \right| \hat\xi_t^A\right)\ge
1-\exp\left(-\Gamma((q-\eta)/q)q\left|\hat\xi_t^A\right|\right)$. On
the event $F_t$, $\left|\hat\xi_t^A\right|\ge \lceil\ep_1 n\rceil$,
and so
 $$P_G(B_t^c \cap H_t) \le P_G\left(B_t^c \cap \left\{\left|\hat\xi_t^A\right| \ge \lceil\ep_1n\rceil\right\}\right)
 \le \exp\left( -\Gamma((q-\eta)/q)q\ep_1 n\right).$$
 The same bound works for the unconditional probability distribution
 $\mathbf P$.

Since $|U_t|\le \ep_1n$, and on the event $H_t \cap B_t$ $|U_t|\ge
(q-\eta)\ep_1n \ge \ep_1n/(r-1)$, using Theorem \ref{isoper} and
similar argument which leads to \eqref{bld2} we have
$$
\mathbf P(C_t^c \cap H_t\cap B_t) \le
\exp\left(-\frac{\eta}{2}\frac{\ep_1 n}{r-1}\log
\frac{r-1}{\ep_1}\right).
$$
Combining these two bounds
\begin{align*}
\mathbf P(F_{t+1}^c \cap H_t) & \le \mathbf P[(B_t \cap C_t)^c \cap H_t] \\ 
& \le \mathbf P(B_t^c \cap H_t) + \mathbf P(C_t^c \cap B_t \cap H_t) \le 2\exp(-2c(\eta)n),
\end{align*}
 where
 $$
 c(\eta)=\frac{1}{2}\min\left\{\Gamma\left(\frac{q-\eta}{q}\right)q\ep_1,
 \frac{\eta}{2}\frac{\ep_1}{r-1}\log\frac{r-1}{\ep_1}\right\}.
 $$
 Hence for $T \equiv \exp(c(\eta)n)$
 \begin{align*}
 \mathbf P\left(\inf_{t\le T}\left|\hat\xi_t^A\right|<\ep_1n\right) 
& \le  \mathbf P\left(\cup_{t=1}^{\lfloor T \rfloor} F_t^c\right) \\
& \le
 \sum_{t=0}^{\lfloor T \rfloor-1} \mathbf P(F_{t+1}^c \cap G_t) \le
 2T\exp(-2c(\eta) n)=2\exp(-c(\eta) n).
 \end{align*}
 which completes the proof. \end{proof}

Lemma \ref{dualpersist} confirms prolonged persistence for the dual.
We will now give the

\mn {\it Proof of Theorem \ref{prolongpersist}.} Choose
$\delta\in(0,qr-1)$ and $\gamma=(20\log r)^{-1}$. Define the random
variables $Y_x, 1\le x\le n$, so that $Y_x=1$ if the dual process
$\hat{\boldsymbol \xi}^{\{x\}}$ starting at $x$ satisfies
$\left|\hat\xi_{\lceil 2\gamma\log n\rceil}^{\{x\}}\right|\ge \lceil
n^b\rceil$ for $b=\gamma\log(qr-\delta)$, and $Y_x=0$ otherwise. By
Lemma \ref{startup}, if $n$ is large, then 
$$
\mathbf EY_x\ge \rho-\delta\quad\hbox{for any $x$}.
$$

Let $\pi_x^1$, $\pi_{x,z}$ and $\alpha_x^{n,3/10}$ be the stopping
times as in \eqref{pi}, and $I_x^1,  I_{x,z}$ be the corresponding
events as in Lemma \ref{growth}. Recall that $\hat G_{x,M}$
is teh subgraph with vertex set $V_n \cap \{u: d_0(x,u) \le M\}$.
On the event $I_{x,z}$, $\hat G_{x, \lceil2\gamma\log n\rceil}$
and $\hat G_{z,\lceil2\gamma\log n\rceil}$ are oriented
finite $r-$trees consisting of disjoint sets of vertices, since
$2r^{\lceil 2\gamma\log n\rceil}\le n^{1/5}$ by the choice of
$\gamma$. Hence if $P_{I_{x,z}}$ is the conditional distribution of
$\left(\hat {\boldsymbol\xi}^{\{x\}}, \hat
{\boldsymbol\xi}^{\{z\}}\right)$ given $I_{x,z}$, then
 \beqax & &P_{I_{x,z}} \left[\left(\hat \xi_t^{\{x\}}, 0\le t\le \lceil 2\gamma\log n\rceil\right) \in \cdot ,
 \left(\hat \xi_t^{\{z\}}, 0\le t\le \lceil 2\gamma\log n\rceil\right) \in \cdot \right]\\
 & = &P_{I_{x,z}}\left[\left(\hat \xi_t^{\{x\}}, 0\le t\le \lceil 2\gamma\log n\rceil\right) \in \cdot\right]
 P_{I_{x,z}}\left[\left(\hat \xi_t^{\{z\}}, 0\le t\le \lceil 2\gamma\log n\rceil\right) \in
 \cdot\right].
 \eeqax

 Having all the ingredients ready we will now estimate the
 covariance between the events $\{Y_x=1\}$ and $\{Y_z=1\}$ for $x \neq z$. Standard
 probability arguments give the inequalities
 \beqax
 \mathbf P(Y_x=1, Y_z=1)
 &\le & \mathbf P[(Y_x=1, Y_z=1) \cap I_{x,z}] + \mathbb
 P(I_{x,z}^c)\\
 &= &  P_{I_{x,z}}(Y_x=1, Y_z=1) \mathbb P(I_{x,z})+\mathbb
 P(I_{x,z}^c)\\
 &= &  P_{I_{x,z}}(Y_x=1) P_{I_{x,z}}(Y_z=1) \mathbb P(I_{x,z})+\mathbb
 P(I_{x,z}^c)\\
 &= & \mathbf P[(Y_x=1) \cap I_{x,z}] \mathbf P[(Y_z=1) \cap
 I_{x,z}]/\mathbb P(I_{x,z}) +\mathbb
 P(I_{x,z}^c)\\
 &\le & \mathbf P(Y_x=1) \mathbf P(Y_z=1)/\mathbb P(I_{x,z}) + \mathbb
 P(I_{x,z}^c).
 \eeqax
 Subtracting $\mathbf P(Y_x=1) \mathbf P(Y_z=1)$ from both sides gives
 \beqa
 & & \mathbf P(Y_x=1, Y_z=1)-\mathbf P(Y_x=1) \mathbf P(Y_z=1) \notag\\
 &\le & \mathbf P(Y_x=1) \mathbf P(Y_z=1)\left(\frac{1}{\mathbb P(I_{x,z})}-1\right) + \mathbb
 P(I_{x,z}^c) \notag\\
 & \le & \mathbb P(I_{x,z}^c)[1+1/\mathbb P(I_{x,z})], \label{eq3}
 \eeqa
 where in the last inequality we replaced the two probabilities by 1.
 Now from Lemma \ref{growth}
 $\mathbb P(I_{x,z}^c)\le 5n^{-3/5}$, and so
 \beqx \mathbf P(Y_x=1, Y_z=1)-\mathbf P(Y_x=1) \mathbf P(Y_z=1)
 \le 5n^{-3/5}\left(1+1/\left(1-5n^{-3/5}\right)\right) \le 15n^{-3/5}\eeqx
 for large enough $n$. Using this bound,
$$
{\bf \var} \left(\sum_{x=1}^n Y_x\right)\le n+15 n(n-1) n^{-3/5},
$$
and Chebyshev's inequality shows that as $n\to\infty$
$$
\mathbf P\left(\left|\sum_{x=1}^n (Y_x-\mathbf EY_x)\right|\ge
n\delta\right) \le \frac{n+15n(n-1)n^{-3/5}}{n^2\delta^2}\to 0.
$$
Since $\mathbf EY_x\ge \rho-\delta$, this implies \beq\label{sum}
\lim_{n\to\infty} \mathbf P\left(\sum_{x=1}^n Y_x \ge
n(\rho-2\delta)\right)=1. \eeq

 Our next goal is to show that $\xi_T^1$ contains the random
 set $D \equiv \{x: Y_x=1\}$ at $T=T_1+T_2$, a time that grows exponentially fast in $n$.
 We choose $\eta>0$ so that $(q-\eta)(r-1-\eta)>1$. Let $\ep_1$ and
 $c(\eta)$ be the constants in Lemma \ref{dualpersist}. If
 $Y_x=1$, then $\left|\hat\xi_{T_1}^{\{x\}}\right| \ge \lceil n^b\rceil$ for $T_1=\lceil 2\gamma\log n\rceil$.
 Combining the error probabilities of Lemmas \ref{carryon} and
 \ref{dualpersist} shows that for $T_2=\left\lfloor\exp(c(\eta)n)\right\rfloor +
 \left\lceil \ep_1 n - n^b\right\rceil$, and for any subset $A$ of vertices with $|A|\ge \lceil n^b\rceil$
 \beq\label{eq1} \mathbf P\left(\left|\hat \xi_{T_2}^A\right| \ge \lceil\ep_1 n\rceil\right) \ge
 1-3\exp\left(-n^{b/4}\right)\eeq
 for large $n$.

 Let $\mathcal{C}$ be the set of all subsets of $V_n$ of size at
 least $\lceil n^b \rceil$, and denote $C_x \equiv \hat\xi_{T_1}^{\{x\}}$. Using
 the duality relationship of \eqref{duality1}
 for the conditional probability distribution
 \beqx \mathcal P(\cdot)= \mathbf P\left(\cdot \left| \hat \xi_t^{\{x\}}, 0\le
 t\le T_1, x\in V_n\right.\right),\eeqx
 we see that
 \beqax
 \mathcal P\left(\xi_{T_1+T_2}^1\supseteq D\right)
 & = & \mathcal P\left[\cap_{x\in D} \left(x\in \xi_{T_1+T_2}^1\right)\right]\\
 & = & \mathcal P\left[\cap_{x\in D} \left(\hat \xi_{T_1+T_2}^{\{x\}} \neq
 \emptyset\right)\right].
 \eeqax
 Since $D=\{x: Y_x=1\}$, it follows from the definition of $Y_x$ that
$C_x\in\mathcal C $ for all $x\in D$. So
 by the Markov property of the dual process the above is
 \beqax
 & = & \sum_{C_x \in \mathcal C, x\in D} \mathcal P\left[\cap_{x\in D} \left(\hat \xi_{T_1+T_2}^{\{x\}} \neq \emptyset, \hat \xi_{T_1}^{\{x\}}=C_x
 \right)\right]\\
 & = & \sum_{C_x \in \mathcal C, x\in D} \mathbf P\left[\cap_{x\in D} \left(\hat \xi_{T_2}^{C_x} \neq \emptyset\right)\right]
  \mathcal P\left[\cap_{x\in D} \left( \hat
 \xi_{T_1}^{\{x\}}=C_x\right)\right].
 \eeqax
 Using \eqref{eq1} $\mathbf P\left(\hat\xi_{T_2}^{C_x} \neq \emptyset\right)\ge
 \mathbf P\left(\left|\hat\xi_{T_2}^{C_x}\right|\ge \lceil\ep_1n\rceil\right)\ge 1-3\exp\left(-n^{b/4}\right)$. So the
 above is
 \beqax
 & \ge & \left(1-3|D|\exp\left(-n^{b/4}\right)\right) \sum_{C_x \in \mathcal C, x\in D} \mathcal P\left[\cap_{x\in D} \left( \hat \xi_{T_1}^{\{x\}}=C_x\right)\right]\\
 & \ge & 1-3n\exp\left(-n^{b/4}\right).
 \eeqax
 For the last inequality we use $|D|\le n$ and $\mathcal P(Y_x=1
 \forall x\in D)=1$. Since the lower bound only depends on $n$, the
 unconditional probability
 \beqx \mathbf P\left(\xi_{T_1+T_2}^1 \supseteq \{x: Y_x=1\}\right)
 \ge 1-3n\exp\left(-n^{b/4}\right).\eeqx

 Hence for $T=T_1+T_2$ using the attractiveness property of the
 threshold contact process, and combining the last calculation with
 \eqref{sum} we conclude that as $n\to\infty$
 \begin{align*} \inf_{t\le T} \mathbf P\left(\frac{|\xi^1_t|}{n}>\rho-2\delta\right)
 & = \mathbf P\left(\frac{|\xi^1_T|}{n}>\rho-2\delta\right) \\
 & \ge \mathbf P\left(\xi^1_T\supseteq \{x: Y_x=1\}, \sum_{x=1}^n Y_x\ge n
 (\rho-2\delta)\right)\to 1 .
 \end{align*}
 This completes the proof of Theorem \ref{prolongpersist}. \eopt

\section{Proof of Theorem \ref{weakpersist}}

 Recall the definition of the active sets $A_x^k, k=0, 1, \ldots, \beta_x$, and the removed
 sets $R_x^k, k=0, 1, \ldots, \beta_x$, introduced before Lemma \ref{growth}. Also recall the stopping times $\pi_x^1$ and
 $\alpha_x^{n,\delta}$ in \eqref{pi} and define
 \beqx \pi_x^2 \equiv \min \left\{l > \pi_x^1: \left| R_x^l \right|
 <l-1\right\}.\eeqx
 This is the time of second collision while exploring $\hat G_n$
 starting from $x$. First we show that with high probability for every vertex $x\in V_n$
 the second collision occurs after $\lceil n^{1/4-\delta} \rceil$ many steps for any
 $\delta\in(0,1/4)$.

 \begin{lemma}\label{jointgrowth}
 Let $\delta\in (0,1/4)$ and $I_x^2$ be the event
 \beqx I_x^2 \equiv \left\{\pi_x^2 \wedge \beta_x \ge
 \alpha_x^{n,1/4+\delta}\right\}.\eeqx
 Then for $I \equiv \cap_{x\in V_n} I_x^2$, $\mathbb P(I^c) \le 2n^{-4\delta}$ for large enough $n$.
 \end{lemma}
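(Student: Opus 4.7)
The plan is to adapt the proof of Lemma \ref{growth} by bounding the probability of a \emph{pair} of collision events in the early phase of the BFS, and then apply a union bound over the $n$ starting vertices $x \in V_n$.

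First I reduce $(I_x^2)^c$ to an early-collision event. Since two collisions have occurred by step $\pi_x^2$, we have $|R_x^{\pi_x^2}| = \pi_x^2 - 2$; the inequality $\pi_x^2 < \alpha_x^{n,1/4+\delta}$ thus forces $\pi_x^2 - 2 < \lceil n^{1/4-\delta}\rceil$, i.e., $\pi_x^2 \le K \equiv \lceil n^{1/4-\delta}\rceil + 1$. The alternative possibility $\beta_x < \alpha_x^{n,1/4+\delta}$ with at most one collision would require the active set to shrink to zero solely through duplicate out-neighbor reveals; using the identity $|A_x^K| = 1 + (r-1)K - rc - D$ (with $c$ the number of collisions and $D$ the number of duplicate neighbor reveals in the first $K$ BFS steps), this would demand $D \gtrsim (r-1)K$ reveal-time coincidences, whose probability decays much faster than $n^{-1-4\delta}$.

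For the dominant contribution I extend the single-collision estimate from Lemma \ref{growth} to pairs. That proof yields $\mathbb P(|R_x^k| = |R_x^{k-1}|) \le (k-1)/(n-r)$ by tracing the collision at step $k$ back to the reveal event that originally placed $x_{k-1}$ into $A_x$ as an out-neighbor of a previously-processed vertex, a uniform draw from a set of size at least $n-r$. Since the reveals underlying two distinct collisions involve (conditionally) independent uniform draws from the independent $r$-tuples $\{(y_1(z),\ldots,y_r(z))\}_{z \in V_n}$, iterated conditioning gives
\[
\mathbb P\bigl(|R_x^{k_1}| = |R_x^{k_1-1}|,\ |R_x^{k_2}| = |R_x^{k_2-1}|\bigr) \le \frac{(k_1-1)(k_2-1)}{(n-r)^2} \quad \text{for } k_1 < k_2,
\]
and summing over $1 \le k_1 < k_2 \le K$ produces $\mathbb P(\pi_x^2 \le K) \le K^4/(8(n-r)^2) \le n^{-1-4\delta}$ for large $n$.

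Combining the two contributions and taking the union bound over $x \in V_n$ yields $\mathbb P(I^c) \le 2 n^{-4\delta}$ for $n$ sufficiently large. The main technical subtlety is justifying the joint bound on pairs of collision events: although each event $\{|R_x^k| = |R_x^{k-1}|\}$ is measurable with respect to the BFS history $\mathcal F_{k-1}$, the estimate $(k-1)/(n-r)$ really comes from the randomness of the underlying reveal step that placed $x_{k-1}$ into $A_x$, and obtaining approximately independent factors for two collisions at steps $k_1 < k_2$ requires identifying them with two distinct (or conditionally independent) uniform draws from the random $r$-tuple structure of $G_n$.
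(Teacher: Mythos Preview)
Your argument is correct and follows essentially the same route as the paper: bound the probability of \emph{two} collision events $\{|R_x^{k_1}|=|R_x^{k_1-1}|\}\cap\{|R_x^{k_2}|=|R_x^{k_2-1}|\}$ by $(k_1-1)(k_2-1)/(n-r)^2$, sum over $1\le k_1<k_2\le \lceil n^{1/4-\delta}\rceil$, and take a union bound over $x\in V_n$.

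A few remarks on the differences. The paper simply asserts that $(I_x^2)^c$ forces two collisions at steps $\le \lceil n^{\delta'}\rceil$ and does not separately discuss the possibility $\beta_x<\alpha_x^{n,1/4+\delta}$ with at most one collision; your treatment of that case via the identity $|A_x^K|=1+(r-1)K-rc-D$ is correct and in fact fills a small gap the paper leaves implicit. The paper also justifies the product bound on the joint collision probability with a single sentence (``the choices of the input nodes are independent''), whereas you flag it as the main technical subtlety; your concern is legitimate, but the reasoning you sketch---that the two collision events trace back to two distinct uniform neighbor-reveals from independent $r$-tuples $(y_1(z),\ldots,y_r(z))$---is exactly what the paper has in mind, and suffices. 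Your cutoff $K=\lceil n^{1/4-\delta}\rceil+1$ differs from the paper's $\lceil n^{\delta'}\rceil$ by~$1$, which is immaterial to the final bound.
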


 \begin{proof}
 Let $\delta'=(1/4)-\delta$. Since in the construction of the random graph $G_n$
 the input nodes $y_i(z), 1\le i\le r$, for any vertex $z$ are distinct and
 different from $z$, there are at least $n-r$ choices for each $y_i(z)$. Also  $\left| R_x^l\right| \le l$ for any
 $l$. So $\mathbb P(|R_x^k|=|R_x^{k-1}|) \le (k-1)/(n-r)$. Now if $I_x^2$ fails to
 occur, then there will be $k_1$ and $k_2$ such that $1\le k_1<k_2
 \le \lceil n^{\delta'}\rceil$ and $|R_x^{k_i}|=|R_x^{k_i-1}|$ for
 $i=1, 2$. So
 \beqax \mathbb P\left[\left(I_x^2\right)^c\right]
 & \le & \sum_{1 \le
 k_1<k_2 \le \lceil n^{\delta'}\rceil} \mathbb P \left(\left|
 R_x^{k_1}\right|=\left| R_x^{k_1-1}\right| , \left|
 R_x^{k_2}\right|=\left| R_x^{k_2-1}\right|\right)\\
 & \le & \sum_{1 \le
 k_1<k_2 \le \lceil n^{\delta'}\rceil}
 \frac{(k_1-1)(k_2-1)}{(n-r)^2} \le \sum_{1\le k_1, k_2 \le \lceil n^{\delta'}\rceil}
 2\frac{(k_1-1)(k_2-1)}{n^2} \le 2n^{4\delta'-2}\eeqax
 for large enough $n$. The second inequality holds because the choices of the input nodes are independent.
 Hence $\mathbb P(I^c) \le \sum_{x\in V_n} \mathbb P
 \left[\left(I_x^2\right)^c\right] \le 2n^{4\delta'-1}=2n^{-4\delta}$.
  \end{proof}

  Lemma \ref{jointgrowth} shows that with high probability for all vertices there will 
  be at most one collision until we have explored $\lceil
  n^{1/4-\delta}\rceil$ many vertices starting from any vertex of
  $\hat G_n$. Now recall the definition of the distance functions $d_0$ and $d$ from \eqref{dist}, and $m(A,K)$ given in \eqref{mAdef}. Let
 $R=\log n/\log r$, $a=(1/8-\delta)$ and let $\rho$ be the branching
 process survival probability defined in \eqref{bpsurvp}.

 \begin{lemma}\label{dualweakpersist}
 Let $P_I$ denote the conditional distribution of $\hat{\boldsymbol\xi}^{\{x\}}, x\in V_n$
 given $I$, where $I$ is the event defined in Lemma \ref{jointgrowth}. If $qr>1$ and $\delta_0$
 is small enough, then for any
 $0<\delta<\delta_0$ there are constants $C(\delta)>0$,
 $B(\delta)=(1/8-2\delta)\log(qr-\delta)/\log r$ and a stopping time
 $T$ satisfying
 $$
 P_I\left(T<2\exp\left(C(\delta)n^{B(\delta)}\right)\right)\le
 2\exp\left[-C(\delta) n^{B(\delta)}\right],
 $$
such that for any $A$ with $m(A, 2\lceil aR\rceil)\ge \lfloor
n^{B(\delta)}\rfloor$, $\left|\hat\xi_T^A\right|\ge \lfloor
n^{B(\delta)}\rfloor$.
\end{lemma}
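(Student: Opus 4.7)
I will implement, and then iterate, the recovery argument sketched after Theorem~\ref{weakpersist}: a single ``epoch'' of length $\lceil aR\rceil$ takes a configuration with $m(\cdot,2\lceil aR\rceil)\ge\lfloor n^B\rfloor$ back to one with the same property, failing with probability $\le\exp(-cn^B)$, and the Markov property then turns repeated successes into the claimed tail bound on $T$.

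To construct one epoch, use $m(A,2\lceil aR\rceil)\ge\lfloor n^B\rfloor$ to extract $N_0\subseteq A$ with $|N_0|=\lfloor n^B\rfloor$ at pairwise $d$-distance $\ge 2\lceil aR\rceil$. Their one-step forward neighborhoods in $\hat G_n$ are then disjoint, so the births from $N_0$ in one dual step are independent with success probability $q$; Lemma~2.3.3 of Durrett~(2007) gives that at least $(q-\delta/2)\lfloor n^B\rfloor$ succeed, with error $\le\exp(-cn^B)$. Select one offspring from each successful birth to form $N_1\subseteq\hat\xi^A_1$, whose members are at pairwise $d$-distance $\ge 2\lceil aR\rceil-2$. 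On the event $I$ of Lemma~\ref{jointgrowth} every radius-$2\lceil aR\rceil$ ball in $\hat G_n$ has at most one collision, and a counting argument then discards at most a further $(\delta/2)\lfloor n^B\rfloor$ balls and yields $N\subseteq N_1$ of size $\ge(q-\delta)\lfloor n^B\rfloor$ whose radius-$(2\lceil aR\rceil-1)$ $d_0$-balls are disjoint, collision-free, oriented $r$-trees.

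Next, run $\lceil aR\rceil-1$ dual steps from each singleton $\{w\}$, $w\in N$. Disjointness of the tree-neighborhoods makes these $|N|$ evolutions mutually independent, and each couples (exactly as in Lemma~\ref{startup}) to a supercritical Galton--Watson process with offspring law $p_0=1-q$, $p_r=q$. The Athreya~(1994) estimate used there gives, for each $w\in N$, that the trajectory reaches size $\ge(qr-\delta)^{\lceil aR\rceil-1}\ge \lceil n^B\rceil$ with probability $\ge\rho-\delta$, since $(a-o(1))\log(qr-\delta)/\log r > B=(a-\delta)\log(qr-\delta)/\log r$. Hence some $w^\star\in N$ succeeds with probability $\ge 1-(1-\rho+\delta)^{(q-\delta)\lfloor n^B\rfloor}\ge 1-\exp(-c'n^B)$. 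Inside the collision-free tree around $w^\star$, distinct descendants $x\ne z$ of $w^\star$ at depth $\le\lceil aR\rceil-1$ satisfy $d(x,z)\ge 2\lceil aR\rceil$, because any $u$ realizing a smaller value would require short forward paths from both $x$ and $z$ meeting inside the tree, contradicting collision-freeness. Thus $m(\hat\xi^A_{\lceil aR\rceil},2\lceil aR\rceil)\ge\lceil n^B\rceil$, closing one epoch. Defining $T$ to be the last epoch endpoint at which the recovery hypothesis $m\ge\lfloor n^B\rfloor$ still holds, the Markov property applied after each epoch shows that $T$ stochastically dominates $\lceil aR\rceil$ times a $\mathrm{Geom}(\exp(-c''n^B))$ random variable; choosing $C(\delta)>0$ suitably gives $P_I(T<2\exp(C n^B))\le 2\exp(-C n^B)$, while $|\hat\xi^A_T|\ge m(\hat\xi^A_T,2\lceil aR\rceil)\ge\lfloor n^B\rfloor$ is automatic from the definition.

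The hardest step will be the collision accounting used to trim $N_1$ down to $N$: the event $I$ only guarantees ``at most one collision per ball,'' so extracting a subset of $(q-\delta)\lfloor n^B\rfloor$ balls with \emph{zero} collisions requires a sharper first-moment bound on the number of $N_1$-balls actually hit by a collision than the union-bound form of Lemma~\ref{jointgrowth}, and this counting is what pins the exponent $1/8$ in $B(\delta)$. The growth and iteration portions parallel Lemmas~\ref{startup}--\ref{dualpersist}, with the single starting vertex replaced by many independent well-separated copies.
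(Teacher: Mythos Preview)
Your overall scheme---extract a $2\lceil aR\rceil$-separated set, take one dual step, then run $\lceil aR\rceil-1$ further steps coupled to independent supercritical branching processes, pick one successful seed $w^\star$, verify its descendants are again $2\lceil aR\rceil$-separated, and iterate via the Markov property---is exactly the paper's argument. The paper phrases the iteration through stopping times $\tau_i=\min\{t>\sigma_{i-1}:m_t<\lfloor n^B\rfloor\}$ and $\sigma_i=\min\{t>\tau_i:m_t\ge\lfloor n^B\rfloor\}$ rather than fixed-length epochs, and sets $T=\sigma_{L-1}$ for $L$ the first failed recovery, but this is only a packaging difference.

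Where you diverge is precisely the step you flag as hardest, and here you have made the problem harder than it is. You propose to pick one offspring per successful birth \emph{arbitrarily}, and then trim $N_1$ down to $N$ by a probabilistic count of how many of the resulting $(2\lceil aR\rceil-1)$-balls are hit by a collision, conceding that this needs a first-moment bound sharper than Lemma~\ref{jointgrowth}. The paper never does this. On the event $I$, \emph{every} vertex $x$ has at most one collision in $\{z:d_0(x,z)\le 2\lceil aR\rceil\}$; since each child's ball $\{z:d_0(y_j(x),z)\le 2\lceil aR\rceil-1\}$ sits inside this set, that single collision can spoil the ball of at most one of the $r$ children. Hence at least $r-1\ge 2$ children of every parent in $X_i$ have collision-free $(2\lceil aR\rceil-1)$-balls, and one simply \emph{selects} such a child when forming $N_i$. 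This is a deterministic choice on $I$, with no trimming and no additional moment estimate; every parent that gives birth contributes a good child, so $|N_i|\ge(q-\delta)\lfloor n^B\rfloor$ directly. Your assertion that the collision counting ``is what pins the exponent $1/8$'' is therefore also mistaken: the $1/8$ arises solely from the requirement that $2\lceil aR\rceil$-balls contain at most $n^{1/4-\delta}$ vertices so that Lemma~\ref{jointgrowth} applies, i.e., from $2a<1/4$.
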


 \begin{proof} Let $m_t \equiv m\left(\hat\xi_t^A,2\lceil aR\rceil\right)$. We define the
 stopping times $\sigma_i$ and $\tau_i$ as follows. $\sigma_0 \equiv
 0$, and for $i\ge 0$
 \begin{align*}
 \tau_{i+1} & \equiv \min\left\{t>\sigma_i: m_t< \lfloor n^B\rfloor\right\},\\
 \sigma_{i+1} & \equiv \min\left\{t>\tau_{i+1}: m_t\ge \lfloor  n^B\rfloor\right\}.
 \end{align*}

 Since $\tau_i>\sigma_{i-1}$ for $i\ge 1$,
 $m_{\tau_i-1}\ge \lfloor n^B\rfloor$, and hence there is a set $X_i
 \subset \hat \xi^A_{\tau_i-1}$ of size at least $\lfloor n^B\rfloor$
 such that $d(u,v)\ge 2\lceil aR\rceil$ for any two distinct vertices $u, v\in X_i$.
 Let $E_i$ be the event that at least $(q-\delta)|X_i|$ many
 vertices of $X_i$ give birth at time $\tau_i$.
 Using the binomial large deviation estimate \eqref{bld}
 \beq\label{eq6} P_G(E_i) \ge 1-\exp\left(-\Gamma((q-\delta)/q)q\lfloor n^B\rfloor\right),\eeq
 where $\Gamma(x)=x\log x -x +1$.

 Now let $I$ be the event defined in Lemma \ref{jointgrowth}. Since
 $\left|\left\{z: d_0(x,z) \le 2\lceil aR \rceil\right\}\right|$ is at most $2r^{2\lceil
 aR \rceil}\le 2rn^{2a} \le n^{1/4-\delta}$, so if $I$ occurs, then
 for any vertex $x\in V_n$ there is at most one collision in $\left\{z: d_0(x,z) \le 2\lceil aR
 \rceil\right\}$, and hence there are at least $r-1$ input nodes $u_1(x),
 \ldots, u_{r-1}(x)$ of $x$ such that $\left\{z: d_0(u_i(x),z) \le 2\lceil aR
 \rceil -1\right\}$ is a finite oriented $r-$tree for each $1\le i\le
 r-1$. Since the right side of \ref{eq6} depends only on $n$,
 \beqx P_I(I\cap E_i) = P_I(E_i) \ge 1-\exp\left(-c_1(\delta)n^B\right), \eeqx
 where $c_1(\delta)=\Gamma((q-\delta)/q)q/2$. If $I \cap E_i$
 occurs, then we can choose one suitable offspring
 of each of the vertices in $X_i$, which give birth, to form a subset
 $N_i \subset \hat\xi_{\tau_i}^A$ such that $|N_i| \ge (q-\delta)
 \lfloor n^B\rfloor$, $d(u,v) \ge 2\lceil aR\rceil-2$ for any two distinct vertices $u,v \in
 N_i$, and $\{z: d_0(u,z) \le 2\lceil aR
 \rceil -1\}$ is a finite oriented $r-$tree for each $u \in N_i$.

 By the definition of $N_i$ it is easy to see that for each $x\in
 N_i$
 \beqx P_I \left[\left(\left|\hat\xi_t^{\{x\}}\right|,
 0\le t\le 2\lceil aR\rceil-1 \right)\in \cdot\right]
 =P_{Z^x}\left[\left(Z_t^x, 0\le t\le 2\lceil aR\rceil-1 \right) \in \cdot \right],\eeqx
 where $\mathbf Z^x$ is a supercritical branching process, as introduced in Lemma \ref{startup},  with distribution $P_{Z^x}$ and mean
 offspring number $qr$. Let $B_x$ be the event of survival for
 $\mathbf Z^x$, and $F_x=\cap_{t=\lfloor \delta R\rfloor-1}^{\lceil
 aR\rceil-2} \left\{ Z_{t+1}^x\ge(qr-\delta)Z_t^x\right\}$. So
 $P_{Z^x}(B_x)=\rho>0$ as in \eqref{bpsurvp}. Using the error
 probability of \eqref{brpldp}
  \beq\label{eq2} P_{Z^x}(F_x^c|B_x)\le
  \sum_{t=\lfloor \delta R\rfloor-1}^{\lceil aR\rceil-2} e^{-c'(\delta)t} \le C_\delta
  e^{-c'(\delta)\delta\log n/(2\log r)}= C_\delta
  n^{-c'(\delta)\delta/(2\log r)} \eeq
  for some constants $C_\delta, c'(\delta)>0$. On the event $B_x\cap F_x$,
 $$Z_{\lceil aR \rceil-1}^x\ge
 (qr-\delta)^{(\lceil aR\rceil-1)-(\lfloor \delta R\rfloor-1)} \ge (qr-\delta)^{(a-\delta)R}=n^{(a-\delta)\log(qr-\delta)/\log r}=n^B.$$
 Hence for $Q_x \equiv \left\{ \left|\hat\xi_{\lceil aR\rceil-1}^{\{x\}}\right| \ge \lceil
 n^B\rceil \right\}$ for $x\in N_i$, we use standard probability arguments and  \eqref{eq2}  to
 have
 \beqa
 P_I(Q_x) & = & P_I\left(\left|\hat\xi_{\lceil aR\rceil-1}^{\{x\}}\right| \ge \lceil n^B\rceil\right) 
  =  P_{Z^x}\left(Z_{\lceil aR\rceil-1}^x \ge \lceil n^B\rceil\right) \notag\\
 & \ge & P_{Z^x}(B_x \cap  F_x)
 \ge P_{Z^x}(B_x ) P_{Z^x}(F_x | B_x)
 \ge \rho-\delta \label{survive}\eeqa
 for large enough $n$.

 Since $d(u,v)\ge 2\lceil aR\rceil-2$ for any two distinct vertices $u, v\in N_i$,
 $\hat\xi_t^{N_i}$ is a disjoint union of $\hat\xi_t^{\{x\}}$ over
 $x\in N_i$ for $t\le \lceil aR\rceil-1$. Let $H_i$ be the event
 that there is at least one $x\in N_i$ for which $Q_x$ occurs. Then
  recalling that $|N_i| \ge (q-\delta)\lfloor n^B\rfloor$ on
  $E_i$,
 \beq P_I(H_i^c|E_i) \le
 (1-\rho+\delta)^{(q-\delta)
 \lfloor n^B\rfloor}=\exp\left(-c_2(\delta)n^B\right), \eeq
 where $c_2(\delta)=(q-\delta)\log(1/(1-\rho+\delta))/2$.

 If $H_i\cap E_i$ occurs, choose any vertex $w_i\in N_i$ such that
 $Q_{w_i}$ occurs and let $S_i \equiv \hat\xi_{\lceil aR\rceil-1}^{\{w_i\}}$. By
 the choice of $w_i$, $|S_i|\ge \lfloor n^B\rfloor$. Since $(\lceil aR\rceil-1) +\lceil aR\rceil
 = 2\lceil aR\rceil -1$, for any two distinct vertices $x,z\in S_i$  the subgraphs induced by
 $\left\{u: d_0(x,u)\le \lceil aR \rceil\right\}$ and $\left\{u: d_0(z,u)\le \lceil aR
 \rceil\right\}$ are finite $r-$trees consisting of disjoint sets of
 vertices, and hence $d(x,z) \ge 2\lceil aR\rceil$. Hence using monotonicity of the
 dual process $\sigma_i\le \tau_i+\lceil aR\rceil-1$ on this event $H_i \cap E_i$. So
 $$
 P_I(\sigma_i>\tau_i+\lceil aR\rceil-1)\le P_I(E_i^c)+ P_I(H_i^c|E_i) \le 2\exp(-2C(\delta)n^B),
 $$ where $C(\delta) \equiv \min\{c_1(\delta), c_2(\delta)\}/2$. Let $L=\inf\{i\ge 1: \sigma_i>\tau_i+\lceil aR\rceil-1\}$. Then
 \begin{align*}
 P_I\left[L>\exp\left(C(\delta)n^B\right)\right] & \ge \left[1-2\exp(-2C(\delta)n^B)\right]^{\exp\left(C(\delta)n^B\right)}\\
& \ge 1-2\exp\left(-C(\delta)n^B\right).
\end{align*}
Since $\sigma_i>\tau_i>\sigma_{i-1}$, $\sigma_{L-1}\ge 2(L-1)$. As
$\left|\hat\xi_{\sigma_{L-1}}^A\right|\ge  \lfloor n^B\rfloor$, we
get our result if we take $T=\sigma_{L-1}$.
 \end{proof}

As in the proof of Theorem \ref{prolongpersist}, survival of the
dual process gives persistence of the threshold contact process.

 \mn {\it Proof of Theorem \ref{weakpersist}.} Let
 $0<\delta<\delta_0, \rho$, $a=(1/8-\delta)$ and
 $B=(1/8-2\delta)\log(qr-\delta)/\log r$ be the constants from the
 previous proof. Define the random variables $Y_x, 1\le x\le n$, as
 $Y_x=1$ if the dual process $\hat{\boldsymbol \xi}^{\{x\}}$ starting
 at $x$ satisfies $\left|\hat\xi_{\lceil
 aR\rceil-1}^{\{x\}}\right|> \lfloor n^B\rfloor$ and $Y_x=0$
 otherwise.

 Consider the event $I_x^1=\left\{\pi_x^1 \wedge \beta_x \ge
 \alpha_x^{n,1/4+\delta}\right\}$, where $\pi_x^1, \beta_x$ and
 $\alpha_x^{n,1/4+\delta}$ are stopping times defined as in \eqref{pi}.
 Using Lemma \ref{growth} and \ref{jointgrowth}
 \beq\label{eq7} P_I\left[\left(I_x^1\right)^c \right]
 \le \frac{\mathbb P\left[\left(I_x^1\right)^c\right]}{\mathbb P(I)}
 \le \frac{n^{-2(1/4+\delta)}}{1-2n^{-4\delta}} \le
 2n^{-(1/2+2\delta)}.\eeq
 Let $J_x \equiv I \cap I_x^1$ and $P_{J_x}$ be the conditional distribution of
 $\hat{\boldsymbol\xi}^{\{x\}}$ given $J_x$. Since the number of vertices in the set $\{u: d_0(x,u) \le \lceil
 aR\rceil-1\}$ is at most $2r^{\lceil aR\rceil-1} \le
 2r^{aR}<n^{1/4-\delta}$ by the choice of $a$,
 \beqx P_{J_x} \left[\left(\left|\hat\xi_t^{\{x\}}\right|,
 0\le t\le \lceil aR\rceil-1 \right)\in \cdot\right]
 =P_{Z^x}\left[\left(Z_t^x, 0\le t\le \lceil aR\rceil-1 \right) \in \cdot \right],\eeqx
 where $\mathbf Z^x$ is a supercritical branching process, as introduced in Lemma \ref{startup},  with distribution $P_{Z^x}$ and mean
 offspring number $qr$. Let $B_x$ and $F_x=\cap_{t=\lfloor \delta R\rfloor-2}^{\lceil aR\rceil-2} \left\{Z_{t+1}^x \ge
 (qr-\delta)Z_t^x\right\}$. So $P_{Z^x}(B_x)=\rho>0$ as in
 \eqref{bpsurvp}, and similar to \eqref{eq2}
 \beqx P_{Z^x} (F_x^c | B_x) \le \sum_{t=\lfloor \delta R\rfloor-2}^{\lceil
 aR\rceil-2} e^{-c'(\delta) t} \le C_\delta
 n^{-c'(\delta)\delta/(2\log r)}\eeqx
 for some constants $C_\delta, c'(\delta)>0$. On the event $B_x \cap
 F_x$, $Z_{\lceil aR\rceil-1}^x \ge (qr-\delta)^{(\lceil
 aR\rceil-1)-(\lfloor \delta
 R\rfloor-2)}>(qr-\delta)^{(a-\delta)R} \ge \lfloor n^B\rfloor$. Hence using
 \eqref{eq7}
 \beqax
 P_I(Y_x=1)
 & \ge &  P_I\left(I_x^1 \cap \left\{\left|\hat\xi_{\lceil aR\rceil -1}^{\{x\}}\right|
 > \lfloor n^B\rfloor \right\}\right)\\
 & = &  P_{J_x}\left(\left|\hat\xi_{\lceil aR\rceil -1}^{\{x\}}\right|
 > \lfloor n^B\rfloor\right) P_I(I_x^1)\\
 & = & P_{Z^x}\left(Z_{\lceil aR\rceil-1}^x > \lfloor n^B\rfloor\right)
 P_I(I_x^1)\\
 & \ge & P_{Z^x}(B_x \cap F_x) P_I(I_x^1)
 =P_{Z^x}(B_x) P_{Z^x} (F_x | B_x) P_I(I_x^1) \ge
 \rho-\delta
 \eeqax
 for large enough $n$.

 Next we estimate the covariance between the events $\{Y_x=1\}$ and
 $\{Y_z=1\}$. We consider the stopping times
 $\pi_x^1, \beta_x, \pi_{x,z}, \alpha_x^{n,1/4+\delta}$ as in \eqref{pi} and the
 corresponding event $I_{x,z}$ as in Lemma \ref{growth}. We can use similar argument,
 which leads to \eqref{eq3}, to conclude
 \beqx P_I(Y_x=1,Y_z=1)-P_I(Y_x=1) P_I(Y_z=1) \le
 P_I(I_{x,z}^c)(1+1/P_I(I_{x,z})).\eeqx
 From Lemma \ref{growth} and \ref{jointgrowth},
 \beqx P_I(I_{x,z}^c) \le \frac{\mathbb P(I_{x,z}^c)}{\mathbb P(I)} \le \frac{5n^{-2(1/4+\delta)}}
 {1-2N^{-4\delta}} \le  10n^{-(1/2+2\delta)} \eeqx
 for large enough $n$, and so
 \beqx P_I(Y_x=1,Y_z=1)-P_I(Y_x=1) P_I(Y_z=1) \le 30 n^{-(1/2+2\delta)}\eeqx
 for large $n$. Using the bound on the covariances,
 $$
 \hbox{var}_I\left(\sum_{x=1}^n Y_x\right)\le n +30 n(n-1) n^{-2\delta},
 $$
 and Chebyshev's inequality gives that as $n\to\infty$
 $$
 P_I\left(\left|\sum_{x=1}^n (Y_x-\mathbf EY_x)\right|\ge n\delta\right) \le \frac{n+30 n(n-1)n^{-2\delta}}{n^2\delta^2}\to 0.
 $$
 Since $\mathbf EY_x\ge \rho-\delta$ for all $x\in V_n$, this implies
\beq\label{sum2} \lim_{n\to\infty}  P_I\left(\sum_{x=1}^n Y_x \ge
n(\rho-2\delta) \right)=1. \eeq

 Our next goal is to show that $\xi_T^1$ contains the random
 set $D \equiv \{x: Y_x=1\}$ with high probability for a suitable choice of $T$.
 If $Y_x=1$, then $\left|\hat\xi_{T_1}^{\{x\}}\right| > \lfloor n^B\rfloor$,
 where $T_1=\lceil aR\rceil -1$. Note that $\lceil
 aR\rceil-1+\lceil aR\rceil \le 2\lceil aR\rceil$, and on the event $I$ there can be at most one
 collision in $\{u: d_0(x,u) \le 2\lceil aR\rceil\}$. Even though the first collision occurs between descendants
 of two vertices in $\hat\xi_{T_1}^{\{x\}}$,  still we
 can exclude one vertex from $\hat\xi_{T_1}^{\{x\}}$ to have a set $W_x\subset
 \hat\xi_{T_1}^{\{x\}}$ of size at least $\lfloor n^B\rfloor$ such that for any two distinct vertices
 $z, w\in W_x$, the subgraphs induced by $\{u: d_0(z,u) \le \lceil
 aR\rceil\}$ and $\{v: d_0(w,v) \le \lceil aR\rceil\}$ are finite
 oriented $r-$trees consisting of disjoint sets of vertices, i.e. $d(z,w) \ge 2\lceil aR\rceil$.
 So if $Y_x=1$, then  $m\left(\hat\xi_{T_1}^{\{x\}},2\lceil aR\rceil\right)\ge
 \lfloor n^B\rfloor$ on the event $I$. Using Lemma \ref{dualweakpersist}, after an
 additional $T _2\ge 2\exp\left(C(\delta)n^B\right)$ units of time, the dual
 process contains at least $\lfloor n^B\rfloor$ many occupied sites with $P_I$
 probability $\ge 1-2\exp\left(-C(\delta)n^B\right)$.

 Let $\mathcal{F}$ be the set of all subsets of $V_n$ of size
 $>\lfloor n^B\rfloor$, and denote $F_x \equiv \hat\xi_{T_1}^{\{x\}}$. Using
 the duality relationship of \eqref{duality1}
 for the conditional probability $\mathcal P_I(\cdot) \equiv
 \mathcal P(\cdot|I)$, where
 \beqx \mathcal P(\cdot)= \mathbf P\left(\cdot \left| \hat \xi_t^{\{x\}}, 0\le
 t\le T_1, x\in V_n\right.\right),\eeqx
 we see that
 \beqax
 \mathcal P_I\left(\xi_{T_1+T_2}^1\supseteq D\right)
 & = & \mathcal P_I\left[\cap_{x\in D} \left(x\in \xi_{T_1+T_2}^1\right)\right]\\
 & = & \mathcal P_I\left[\cap_{x\in D} \left(\hat \xi_{T_1+T_2}^{\{x\}} \neq
 \emptyset\right)\right].
 \eeqax
 Since $D=\{x: Y_x=1\}$, $F_x\in\mathcal F$ for all $x\in D$. So
 by the Markov property of the dual process the above is
 \beqax
 & = & \sum_{F_x \in \mathcal F, x\in D} \mathcal P_I\left[\cap_{x\in D} \left(\hat \xi_{T_1+T_2}^{\{x\}} \neq \emptyset, \hat \xi_{T_1}^{\{x\}}=F_x
 \right)\right]\\
 & = & \sum_{F_x \in \mathcal F, x\in D} P_I\left[\cap_{x\in D} \left(\hat \xi_{T_2}^{F_x} \neq \emptyset\right)\right]
  \mathcal P_I\left[\cap_{x\in D} \left( \hat
 \xi_{T_1}^{\{x\}}=F_x\right)\right].
 \eeqax
 Now since $W_x\subset F_x$,  using monotonicity of the dual process, $P_I\left(\hat\xi_{T_2}^{F_x} \neq \emptyset\right)\ge
 P_I\left(\hat\xi_{T_2}^{W_x} \neq \emptyset\right)$. Also using
 Lemma \ref{dualweakpersist}, $P_I\left(\left|\hat\xi_{T_2}^{W_x}\right|\ge \lfloor n^B\rfloor\right) \ge 1-2\exp\left(-C(\delta)n^B\right)$ for any $F_x \in \mathcal F$. So the
 above is
 \beqax
 & \ge & \left(1-2|D|\exp\left(-C(\delta)n^B\right)\right) \sum_{F_x \in \mathcal F, x\in D} \mathcal P_I\left[\cap_{x\in D} \left( \hat \xi_{T_1}^{\{x\}}=F_x\right)\right]\\
 & \ge & 1-2n\exp\left(-C(\delta)n^B\right).
 \eeqax
 For the last inequality we use $|D|\le n$ and $\mathcal P_I(Y_x=1
 \forall x\in D)=1$. Since the lower bound only depends on $n$,
 \beqax P_I\left(\xi_{T_1+T_2}^1 \supseteq \{x: Y_x=1\}\right)
 & \ge & 1-2n\exp\left(-C(\delta)n^B\right) \\
 \Rightarrow \quad  \mathbf P\left(\xi_{T_1+T_2}^1 \supseteq \{x: Y_x=1\}\right)
 & \ge & \mathbb P(I) \left[1-3n\exp\left(-C(\delta)n^B\right)\right] \to 1,\eeqax
 as $n\to \infty$, since $\mathbb P(I) \ge 1-2n^{-4\delta}$ by Lemma \ref{jointgrowth}.

 Hence for $T=T_1+T_2$ using the attractiveness
 property of the threshold contact process, and combining the last
 calculation with \eqref{sum2} we conclude that as $n\to\infty$
 \begin{align*}
 \inf_{t\le T} \mathbf P\left(\frac{|\xi^1_t|}{n}>\rho-2\delta\right)
 & =\mathbf P\left(\frac{|\xi^1_T|}{n}>\rho-2\delta\right) \\
 & \ge \mathbf P\left(\xi^1_T\supseteq \{x: Y_x=1\}, \sum_{x=1}^n Y_x\ge
 n(\rho-2\delta)\right)\to 1,
 \end{align*}
 which completes the proof of Theorem \ref{weakpersist}. \eopt

\section*{References}

\mn
Albert, R. and Othmer, H. G. (2003) The topology of the regulatory interactions predicts the expression pattern of the segment polarity genes in {\it Drosophila melanogaster}. {\it Journal of Theoretical Biology}, 223, 1--18

\mn
Athreya, K.B. (1994) Large deviations for branching processes, I. Single type case.
{\it Ann. Appl.  Prob.} 4, 779--790

\mn
Chaves, M., Albert, R., and Sontag, E.D. (2005) Robustness and fragility of Boolean models for genetic regulatory networks.
{\it J. Theor. Biol.} 235, 431--449

\mn
Derrida, B. and Pomeau, Y. (1986) Random networks of automata: a simplified annealed approximation. {\it Europhysics Letters}, 1, 45--49

\mn Durrett, R. (2007) {\it Random Graph Dynamics.} Cambridge
University Press.

\mn
Flyvbjerg, H. and Kjaer, N. J. (1988) Exact solution of Kaufmann's model with connectivity one. {\it Journal of Physics A}, 21, 1695--1718

\mn
Griffeath, D. (1978) {\it Additive and cancellative interacting particle systems.}
Lecture Notes in Mathematics, 724. Springer, Berlin,

\mn
Kadanoff, L.P., Coppersmith, S., and Aldana, M. (2002) Boolean dynamics with random couplings.
arXiv:nlin.AO/0204062

\mn
Kauffman, S. A. (1969) Metabolic stability and epigenesis in randomly constructed genetic nets. {\it Journal of Theoretical Biology}, 22, 437--467

\mn
Kauffman, S. A. (1993) {\it Origins of Order: Self-Organization and Selection in Evolution}. Oxford University Press.

\mn
Kauffman, S.A., Peterson, C., Samuelson, B., and Troein, C. (2003) Random Boolean models and the yeast transcriptional network.
{\it Proceedings of the National Academy of Sciences} 110, 14796--14799

\mn
Li, F., Long, T., Lu Y., Ouyang Q., Tang C. (2004) The yeast cell-cycle is robustly designed. {\it Proceedings of the National Academy of Sciences}, 101, 4781--4786.

\mn
Liggett, T. M.(1999) {\it Stochastic Interacting Systems: Contact, Voter, and Exclusion Processes}. Springer.

\mn
Nyter, M., Price, N.D., Aldana, M., Ramsey. S.A., Kauffman, S.A., Hood, L.E., Yli-Harja, O., and Shmuelivich, I. (2008)
{\it Proceedings of the National Academy of Sciences.} 105, 1897--1900

\mn
Shmulevih, I., Kauffmann, S.A., and Aldana, M. (2005) Eukaryotic cells are dynamically ordered or critical but not chaotic.
{\it Proceedings of the National Academy of Sciences.} 102, 13439--13444

\end{document}